\newtheorem{theo}{Theorem}[section]
\newtheorem{lemm}[theo]{Lemma}
\numberwithin{equation}{section}
\begin{document}
	\title{On the Cauchy problem for a weakly dissipative Camassa-Holm  equation in critical Besov spaces }
	\author{
		Zhiying $\mbox{Meng}^1$ \footnote{email: mengzhy3@mail2.sysu.edu.cn} \quad and\quad
		Zhaoyang $\mbox{Yin}^{1,2}$ \footnote{email: mcsyzy@mail.sysu.edu.cn}\\
		$^1\mbox{Department}$ of Mathematics, Sun Yat-sen University,\\
		Guangzhou, 510275, China\\
		$^2\mbox{Faculty}$ of Information Technology,\\
		Macau University of Science and Technology, Macau, China
	}
	\date{}
	\maketitle
	\begin{abstract}
		In this paper, we mainly consider the Cauchy problem  of a weakly dissipative Camassa-Holm  equation.  We first establish the local well-posedness of equation in Besov spaces $B^{s}_{p,r}$ with $s>1+\frac 1 p$ and $s=1+\frac 1 p , r=1,~p\in [1,\infty).$ Then, we prove the global existence for small data, and present two blow-up criteria. Finally,
		we  get two blow-up results,  which can be used in the proof of the ill-posedness  in critical Besov spaces.
	\end{abstract}
	
	\noindent \textit{Keywords}: A weakly dissipative Camassa-Holm  equation; Local well-posedness;  Global existence; Blow up; Ill-posedness\\
		\noindent \textit{Mathematics Subject Classification}: 35Q53, 35B10, 35B65, 35C05 
	
	\tableofcontents
	\section{Introduction}
	\par
	\quad\quad In this paper, we consider   the  Cauchy problem for the following   weakly dissipative Camassa-Holm  equation
	\begin{equation}\label{0.1}
		\left\{\begin{array}{l}
			u_t-u_{txx}+3uu_x+\lambda(u-u_{xx})=2uu_x+uu_{xxx}+\alpha u+\beta u^2u_x+\gamma u^3u_x+\Gamma u_{xxx},~x\in\mathbb{R},\ t>0, \\
			u(0,x)=u_0,
		\end{array}\right.
	\end{equation}
	where  $\alpha, \beta,\gamma,\Gamma$ are arbitrary real constants and $\lambda>0.$  This equation was
	proposed by Freire  in \cite{Igor02decay}. Let $h(u)=(\alpha+\Gamma)u+\frac{\beta}{3}u^3+\frac{\gamma}{4}u^4, ~\Lambda ^{-2}=(1-\partial_{xx})^{-1},$ the equation \eqref{0.1} can be rewritten as
	\begin{equation}\label{001}
		\left\{\begin{array}{l}
		u_t+(u+\Gamma)u_x+\lambda u=Q,~x\in\mathbb{R},\ t>0, \\
		u(0,x)=u_0,
\end{array}\right.
\end{equation}
where $Q=\Lambda ^{-2}\partial_x\Big(h(u)-u^2-\frac{1}{2}u_x^2\Big).$ 

If $\lambda=\beta=\Gamma=0$ and $\alpha\neq 0,$ it reduces to the Dullin-Gottwald-Holm equation \cite{Igor2019bdd,Dullin2001An}.  If $\beta=\gamma=0,~\lambda>0$ and $ \alpha \Gamma\neq 0$, the equation \eqref{001} becomes the weakly dissipative DGH equation \cite{DGH.NE2013,DGH.NE2016,DGH.ZZY2015}.  If $\alpha=\beta=\gamma=\Gamma=0$ and $\lambda>0$, the equation \eqref{001} becomes  weakly  dissipative CH equation \cite{WDCH.wsyy2006,DCH.wsyyz2009}.  If  $\lambda=\alpha=\beta=\gamma=\Gamma=0,$  we deduce that 
	\begin{align}\label{ch}
		(1-\partial_x^2)u_t=3uu_x-2u_xu_{xx}-uu_{xxx},
	\end{align}
	which is the famous Camassa-Holm (CH) equation.
	There are many properties of the CH equation. For example, it is completely integrable \cite{Con.On2001,Constantin2006,Constantin1999} and has a Hamiltonian structure \cite{1981Symplectic}. Its  solitary waves solutions and peakon solutions of the form $ce^{|x-ct|}$ were studied in \cite{Camassa.1994A,Constantin2000Stability,Constantin2000Stability1,YE2021A}. The local well-posedness and ill-posedness global strong solutions, blow-up strong solutions of the CH equation were investigated in  \cite{Guoyy2021Ill,Brandole2014Blowup,Constantin2000,Constantin1998,Constantin1998w-l,Constantin1998w-b,Danchin2001inte,Guozi2019ill,Liuyue2006glo,LiYi,Li2021,Li2016nwpC,Rodr2001CH,Yinzy2004wbglo,YE2021A,Lij022Ill}. The global weak solutions, global  conservative solutions and dissipative solutions f the CH equation also were studied in \cite{Bressan2007gd,Bressan2006g-c,Constantin2000gw,Holden07glorod,Holden07gloCH,Xin2000ws}.
	
	Recently, Freire eatablished the local  well-posedness and blow-up phenomena for the equation \eqref{001} in  Sobolev spaces \cite{Igor02decay}. In \cite{Silva2020inte}, the authors studied the integrability and  global existence of  solution for the equation \eqref{001} under some conditions. Therefore, it is worth noting that the local well-posedness and ill-posedness, global strong solutions, blow-up phenomenan of \eqref{001} in Besov spaces have not yet been studied. Hence, in this paper, we will study these problems. It is worthy to note that the dissipative term $\lambda (u-u_xx)$  in the equation \eqref{001} do have impacts on the global existence, which is proved  below in Theorem \ref{global}.
	
	This article is organized as follows. In Section 2, we introduce some basic properties in Besov spaces and  some prior estimates about the transport equation. In Section 3, we establish the local well-posedness of the Cauchy problem of \eqref{001}.  Section 4  is devoted to discussing the the global strong solution of \eqref{001} with small initial values, and give two blow-up criteria of the equation \eqref{001}. Then, we gain two blow-up results. Finally, we prove that the Cauchy problem of \eqref{001} is ill-posed  in $B^{\frac 3 2}_{2,r}$ with $1<r\leq \infty.$ 
	\section{Preliminaries}
	\par
	
	~~	In this section, we will present some propositions about the Littlewood-Paley decomposition and Besov spaces.
	Now we state some useful results in the transport equation theory, which are important to the proof of our main theorem later. We first give the following equation
	\begin{equation}\label{transport}
		\left\{\begin{array}{l}
			f_t+v\cdot\nabla f=g,~ t>0, \\
			f(0,x)=f_0(x),~ x\in\mathbb{R}.
		\end{array}\right.
	\end{equation}
	\begin{lemm}\cite{Chemin2011}\label{R}
		Let $s,t>0,~p,r\in[1,\infty].$ Define $R_j=[v\cdot \nabla,\Delta_j]f.$ There exists a constants $C$ such that
		\begin{align*}
			\|(2^{js}\|R_j\|_{L^p})_{j}\|_{l^r(\mathbb{Z})}\leq C\Big(\|\nabla v\|_{L^{\infty}}\|f\|_{B^s_{p,r}}+\|\nabla v\|_{B^{s-1+t}_{p,r}}\|\nabla f\|_{B^{-t}_{\infty,\infty}}\Big).
		\end{align*}
		Then, if $f$ solves the equation \eqref{transport}, we have
		\begin{align*}
			\|f(t)\|_{B^s_{p,r}}\leq \|f_0\|_{B^s_{p,r}}+C\int_0^t\Big(\|\nabla v\|_{L^{\infty}}\|f\|_{B^s_{p,r}}+\|\nabla v\|_{B^{s-1+t}_{p,r}}\|\nabla f\|_{B^{-t}_{\infty,\infty}}+\|g\|_{B^s_{p,r}}\Big)(t')dt'.
		\end{align*}
	\end{lemm}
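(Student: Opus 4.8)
The statement contains two assertions: a commutator estimate for $R_j=[v\cdot\nabla,\Delta_j]f$, and an a priori bound for solutions of \eqref{transport}. My plan is to establish the commutator estimate first, via Bony's paraproduct decomposition, and then to deduce the transport inequality from it by localizing \eqref{transport} in frequency and running an $L^p$-energy argument on each dyadic block, so that the whole proof reduces to the (harder) commutator part.

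For the commutator estimate, write $v\cdot\nabla f=v^k\partial_k f$ (summation over $k$) and apply Bony's decomposition both to $v^k\partial_k f$ and to $v^k\partial_k\Delta_j f$, using
\[
v^k\partial_k f=T_{v^k}\partial_k f+T_{\partial_k f}v^k+R(v^k,\partial_k f).
\]
Subtracting and localizing, $R_j$ decomposes into (i) the genuine paraproduct commutator $\sum_k[T_{v^k},\Delta_j]\partial_k f$, (ii) a term $T_{\partial_k\Delta_j f}v^k-\Delta_j T_{\partial_k f}v^k$, and (iii) a remainder-type contribution $R(v^k,\partial_k\Delta_j f)-\Delta_j R(v^k,\partial_k f)$. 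Term (i) is controlled by the classical commutator lemma for $[\Delta_j,T_a]b$: expressing $\Delta_j$ and the low-frequency cut-off defining $T_a$ through their convolution kernels and performing a first-order Taylor expansion transfers one derivative onto $a=v^k$ at the cost of a factor $2^{-j}$, which offsets the derivative $\partial_k$ falling on a frequency-$2^j$ piece; resumming in $\ell^r$ then produces the two terms on the right-hand side according to whether one spends all the available regularity on $f$ (yielding $\|\nabla v\|_{L^\infty}\|f\|_{B^s_{p,r}}$) or keeps a derivative margin $t$ (yielding $\|\nabla v\|_{B^{s-1+t}_{p,r}}\|\nabla f\|_{B^{-t}_{\infty,\infty}}$). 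Terms (ii) and (iii) are handled by the standard continuity estimates for the paraproduct and remainder operators, together with the quasi-orthogonality of the Littlewood--Paley blocks (only finitely many dyadic scales interact with a fixed $\Delta_j$), again splitting the total regularity as $(s-1+t)+(-t)=s-1$. The step I expect to be the main obstacle is exactly this bookkeeping: one must track, term by term, the two admissible ways of distributing derivatives and Besov indices so that both summands on the right-hand side emerge with the stated indices, and it is here that the hypotheses $s>0$ and $t>0$ enter, ensuring convergence of the relevant geometric-type series.

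For the transport inequality, apply $\Delta_j$ to \eqref{transport}, which gives
\[
\partial_t\Delta_j f+v\cdot\nabla\Delta_j f=\Delta_j g+R_j,
\]
a transport equation for $\Delta_j f$ with source $\Delta_j g+R_j$. Multiplying by $|\Delta_j f|^{p-2}\Delta_j f$, integrating in $x$, and using
\[
\int_{\mathbb{R}}(v\,\partial_x\Delta_j f)\,|\Delta_j f|^{p-2}\Delta_j f\,dx=-\tfrac1p\int_{\mathbb{R}}(\partial_x v)\,|\Delta_j f|^{p}\,dx
\]
(with the cases $p=1,\infty$ obtained by passing to the limit), one obtains
\[
\frac{d}{dt}\|\Delta_j f\|_{L^p}\le C\|\nabla v\|_{L^\infty}\|\Delta_j f\|_{L^p}+\|\Delta_j g\|_{L^p}+\|R_j\|_{L^p}.
\]
Integrating in time, multiplying by $2^{js}$, taking the $\ell^r(\mathbb{Z})$-norm in $j$, and applying Minkowski's inequality to move the time integral outside the $\ell^r$-sum, I then insert the commutator bound from the first part to arrive at precisely the claimed inequality. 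The only routine care needed is at the endpoints $p\in\{1,\infty\}$ in the $L^p$-energy identity and in justifying these manipulations for merely $B^s_{p,r}$-valued solutions; both are dispatched by a standard mollification and approximation argument.
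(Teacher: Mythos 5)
This lemma is not proved in the paper at all: it is quoted verbatim from the reference \cite{Chemin2011}, and your outline --- Bony's decomposition of $[v\cdot\nabla,\Delta_j]f$ into the paraproduct commutator, the paraproduct difference terms and the remainder terms, followed by the dyadic $L^p$ energy estimate with the $\frac1p\int(\operatorname{div}v)|\Delta_jf|^p$ identity and an $\ell^r$ summation --- is exactly the standard argument given there. So the proposal is correct and takes essentially the same route as the cited source; no gap to report.
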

	\begin{lemm}\label{existence}\cite{Chemin2011}
		Let $1\leq p\leq p_1\leq\infty,\ 1\leq r\leq\infty,\ s> -d\min(\frac 1 {p_1}, \frac 1 {p'})$. Let $f_0\in B^s_{p,r}$, $g\in L^1([0,T];B^s_{p,r})$, and let $v$ be a time-dependent vector field such that $v\in L^\rho([0,T];B^{-M}_{\infty,\infty})$ for some $\rho>1$ and $M>0$, and
		$$
		\begin{array}{ll}
			\nabla v\in L^1([0,T];B^{\frac d {p_1}}_{p_1,\infty}), &\ \text{if}\ s<1+\frac d {p_1}, \\
			\nabla v\in L^1([0,T];B^{s-1}_{p,r}), &\ \text{if}\ s>1+\frac d {p_1}\ or\ (s=1+\frac d {p_1}\ and\ r=1).
		\end{array}
		$$
		Then the equation \eqref{transport} has a unique solution $f$ in \\
		-the space $C([0,T];B^s_{p,r})$, if $r<\infty$; \\
		-the space $\Big(\bigcap_{s'<s}C([0,T];B^{s'}_{p,\infty})\Big)\bigcap C_w([0,T];B^s_{p,\infty})$, if $r=\infty$.
	\end{lemm}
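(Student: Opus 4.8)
Since this statement is quoted from \cite{Chemin2011}, I would only sketch the classical argument for completeness. The equation \eqref{transport} is \emph{linear} in $f$, so I would not set up a fixed‑point scheme; instead the plan is the standard four‑step programme for transport equations in Besov spaces: regularize the data, obtain $n$‑uniform bounds from the a priori estimate of Lemma \ref{R}, pass to the limit at one level of regularity below $s$, and finally recover the endpoint regularity $B^s_{p,r}$ and the time‑continuity. Concretely, I would set $f_0^n=S_nf_0$, $g^n=S_ng$, $v^n=S_nv$, so that the regularized problems $\partial_tf^n+v^n\cdot\nabla f^n=g^n$, $f^n(0)=f_0^n$ have smooth global solutions $f^n$ obtained by integrating along the flow of the smooth field $v^n$; the hypothesis $v\in L^\rho([0,T];B^{-M}_{\infty,\infty})$ with $\rho>1$, together with $\nabla v\in L^1_T(\cdots)$, is exactly what is needed to make that flow well defined and to anchor the low frequencies of $v$.

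The next step is to apply Lemma \ref{R} to $f^n$. The commutator term there is controlled by the two alternative hypotheses on $\nabla v$: if $s>1+\frac{d}{p_1}$ (or $s=1+\frac d{p_1}$ and $r=1$) one uses $\nabla f\in B^{s-1}_{p,r}\hookrightarrow B^{-t}_{\infty,\infty}$ directly with a small $t>0$; if $s<1+\frac d{p_1}$ one chooses $t$ with $s-1+t=\frac d{p_1}$, so that $\nabla v\in B^{\frac d{p_1}}_{p_1,\infty}$ multiplies $B^s_{p,r}$ and controls the commutator. The lower bound $s>-d\min(\frac1{p_1},\frac1{p'})$ is precisely the range in which the underlying product/commutator estimates are licit. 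With these in hand, Gronwall's lemma gives a bound on $\|f^n\|_{L^\infty_T B^s_{p,r}}$ of the form $C\exp\big(C\|\nabla v\|_{L^1_T(\cdots)}\big)\big(\|f_0\|_{B^s_{p,r}}+\|g\|_{L^1_T B^s_{p,r}}\big)$, uniform in $n$.

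Then I would show that $(f^n)$ is a Cauchy sequence in $C([0,T];B^{s-1}_{p,r})$: the difference $f^n-f^m$ solves a transport equation whose source collects the differences of the regularized data together with $(v^n-v^m)\cdot\nabla f^m$, and the latter is small by the uniform $B^s_{p,r}$ bound of Step two; applying Lemma \ref{R} at regularity $s-1$ closes the estimate. The limit $f$ then solves \eqref{transport} (pass to the limit in the weak formulation, using strong convergence in $C_TB^{s-1}_{p,r}$ for the transport term and weak convergence for the rest), and by the Fatou property of Besov spaces $f\in L^\infty_T B^s_{p,r}$. To upgrade this to $C_TB^s_{p,r}$ when $r<\infty$, re‑run the a priori estimate on $f$ itself and use $S_Nf\to f$ in $B^s_{p,r}$; when $r=\infty$ one only gets weak‑$*$ continuity in $B^s_{p,\infty}$ plus strong continuity in each $B^{s'}_{p,\infty}$, $s'<s$, by interpolation with the uniform bound. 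Uniqueness follows by applying the same a priori estimate (at regularity $s-1$, or in $L^p$) to the difference of two solutions with identical data.

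The main obstacle is the derivative loss built into Lemma \ref{R}: because $v$ is only Besov‑regular, the commutator cannot be absorbed at regularity $s$, so one is forced to prove convergence one derivative below $s$ and then recover the top index by weak compactness rather than by a genuine contraction — this is the technical heart of the argument. The borderline situations, namely $s=1+\frac d{p_1}$ with $r=1$ and the negative‑regularity endpoint $s=-d\min(\frac1{p_1},\frac1{p'})$, additionally require the sharp form of the commutator estimate in Lemma \ref{R} and careful tracking of the Besov embeddings used above.
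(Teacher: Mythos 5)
The paper does not prove this lemma at all — it is quoted directly from \cite{Chemin2011} — and your sketch faithfully reproduces the standard argument of that reference (regularize the data, get uniform $B^s_{p,r}$ bounds from the commutator estimate of Lemma \ref{R}, converge one derivative lower, recover the top regularity by the Fatou property, and prove time-continuity and uniqueness from the same a priori estimate). So your proposal is correct and takes essentially the same route as the paper's source; no further comparison is needed.
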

	\begin{lemm}\label{priori estimate}\cite{Chemin2011,Li2016nwpC}
		Let $s\in\mathbb{R},\ 1\leq p,r\leq\infty$.
		There exists a constant $C$ such that for all solutions $f\in L^{\infty}([0,T];B^s_{p,r})$ of \eqref{transport} in one dimension with initial data $f_0\in B^s_{p,r}$, and $g\in L^1([0,T];B^s_{p,r})$, we have for a.e. $t\in[0,T]$,
		$$ \|f(t)\|_{B^s_{p,r}}\leq \|f_0\|_{B^s_{p,r}}+\int_0^t \|g(t')\|_{B^s_{p,r}}dt'+\int_0^t V^{'} (t^{'})\|f(t)\|_{B^s_{p,r}}dt{'} $$
		or
		$$ \|f(t)\|_{B^s_{p,r}}\leq e^{CV(t)}\Big(\|f_0\|_{B^s_{p,r}}+\int_0^t e^{-CV(t')}\|g(t')\|_{B^s_{p,r}}dt'\Big) $$
		with
		\begin{equation*}
			V'(t)=\left\{\begin{array}{ll}
				\|\nabla v\|_{B^{s+1}_{p,r}},\ &\text{if}\ s>\max(-\frac 1 2,\frac 1 {p}-1), \\
				\|\nabla v\|_{B^{s}_{p,r}},\ &\text{if}\ s>\frac 1 {p}\ \text{or}\ (s=\frac 1 {p},\ p<\infty, \ r=1),
			\end{array}\right.
		\end{equation*}
		and when $s=\frac 1 p-1,\ 1\leq p\leq 2,\ r=\infty,\ \text{and}\ V'(t)=\|\nabla v\|_{B^{\frac 1 p}_{p,1}}$.\\
		If $f=v,$ for all $s>0, V^{'}(t)=\|\nabla v(t)\|_{L^{\infty}}.$
	\end{lemm}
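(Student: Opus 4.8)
The plan is to run the classical Littlewood--Paley/commutator argument for transport equations: reduce the $B^s_{p,r}$-bound on $f$ to a family of $L^p$ transport estimates on the dyadic blocks $\Delta_j f$, resum in $j$ against the weights $2^{js}$, and then obtain the exponential form from the integral Gr\"onwall inequality. First I would localize \eqref{transport} with $\Delta_j$: writing $R_j:=[v\cdot\nabla,\Delta_j]f=v\cdot\nabla\Delta_j f-\Delta_j(v\cdot\nabla f)$, the block $f_j:=\Delta_j f$ solves $\partial_t f_j+v\cdot\nabla f_j=\Delta_j g+R_j$ with datum $\Delta_j f_0$. The standard $L^p$ energy method for this localized transport equation --- for $1\le p<\infty$ multiply by $|f_j|^{p-2}f_j$, integrate in $x$, and use $\int v\cdot\nabla|f_j|^p\,dx=-\int(\mathrm{div}\,v)|f_j|^p\,dx$; for $p=\infty$ propagate along the flow of $v$ --- gives, using that in one dimension $\mathrm{div}\,v=v_x$,
$$\|f_j(t)\|_{L^p}\le\|\Delta_j f_0\|_{L^p}+\int_0^t\Big(\|\Delta_j g\|_{L^p}+\|R_j\|_{L^p}+C\|v_x\|_{L^\infty}\|f_j\|_{L^p}\Big)(t')\,dt'.$$
Multiplying by $2^{js}$, taking the $\ell^r(\mathbb Z)$-norm in $j$, and moving it inside the time integral by Minkowski's inequality yields
$$\|f(t)\|_{B^s_{p,r}}\le\|f_0\|_{B^s_{p,r}}+\int_0^t\Big(\|g\|_{B^s_{p,r}}+\big\|(2^{js}\|R_j\|_{L^p})_j\big\|_{\ell^r}+C\|v_x\|_{L^\infty}\|f\|_{B^s_{p,r}}\Big)(t')\,dt'.$$

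The crux is bounding the commutator term $\big\|(2^{js}\|R_j\|_{L^p})_j\big\|_{\ell^r}$, and this is exactly where the case split defining $V'$ is forced: it amounts to feeding into Lemma~\ref{R} (plus its companion in the negative-regularity range when $s\le 0$, and a dedicated endpoint estimate when $s=\frac1p-1$) the right choice of the auxiliary index $t$. For $s>\max(-\frac12,\frac1p-1)$ one uses $\nabla f\in B^{s-1}_{p,r}\hookrightarrow B^{-t}_{\infty,\infty}$ for a suitable $t\in(0,2]$, so that by monotonicity of the Besov scale $\|\nabla v\|_{B^{s-1+t}_{p,r}}\lesssim\|\nabla v\|_{B^{s+1}_{p,r}}$, and since also $\|v_x\|_{L^\infty}\lesssim\|\nabla v\|_{B^{s+1}_{p,r}}$ the linear term in the block estimate is absorbed as well; this gives the first claimed inequality with $V'=\|\nabla v\|_{B^{s+1}_{p,r}}$. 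In the more regular range $s>\frac1p$ (or $s=\frac1p,\ p<\infty,\ r=1$) one can instead take $t\le1$, so that $\|\nabla v\|_{B^{s-1+t}_{p,r}}\lesssim\|\nabla v\|_{B^{s}_{p,r}}$ and (by Besov embedding into $L^\infty$) $\|v_x\|_{L^\infty}\lesssim\|\nabla v\|_{B^s_{p,r}}$, which upgrades the bound to $V'=\|\nabla v\|_{B^s_{p,r}}$; the borderline case $s=\frac1p-1,\ 1\le p\le2,\ r=\infty$ is handled separately by the corresponding $B^{1/p}_{p,1}$-type commutator estimate, giving $V'=\|\nabla v\|_{B^{1/p}_{p,1}}$. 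With the first inequality established in each regime, the second follows from the integral form of Gr\"onwall's lemma. Finally, in the special case $f=v$ the commutator $[v\cdot\nabla,\Delta_j]v$ obeys the improved estimate $\big\|(2^{js}\|[v\cdot\nabla,\Delta_j]v\|_{L^p})_j\big\|_{\ell^r}\lesssim\|\nabla v\|_{L^\infty}\|v\|_{B^s_{p,r}}$ for every $s>0$ --- in its Bony decomposition each contribution carries a factor of $\nabla v$ that can be placed in $L^\infty$ --- so one may take $V'=\|\nabla v\|_{L^\infty}$.

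I expect the main obstacle to be the careful bookkeeping in the commutator estimates near the critical regularities: checking that the low-frequency blocks, the endpoint cases $r=\infty$, $s=\frac1p$ and $s=\frac1p-1$, and the admissible window for the auxiliary index $t$ all combine with exactly the right loss of derivatives --- and, more routinely, that the $p=\infty$ transport-along-the-flow argument and the interchange of the $\ell^r$-sum with the time integral are legitimate under the stated hypotheses, which is where the assumption $f\in L^\infty([0,T];B^s_{p,r})$ enters (it is assumed, not derived).
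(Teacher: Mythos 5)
The paper does not prove this lemma at all — it is quoted from \cite{Chemin2011,Li2016nwpC} — so there is no in-paper argument to compare against; your outline is precisely the standard proof from those references (dyadic localization of \eqref{transport}, the $L^p$ estimate on each block using $\int v\cdot\nabla|f_j|^p\,dx=-\int(\mathrm{div}\,v)|f_j|^p\,dx$, the commutator bound of Lemma \ref{R} and its negative-regularity and endpoint companions with a suitably chosen auxiliary index $t$, Minkowski in $j$, then Gr\"onwall), and it is essentially correct. The only delicate points are the ones you already flag — the stated Lemma \ref{R} requires $s,t>0$, so the ranges $s\le 0$ and the endpoints $s=\frac1p$, $s=\frac1p-1$, $r=\infty$ need the refined commutator estimates of the cited references rather than Lemma \ref{R} itself — which is exactly how the proof is completed there.
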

	\begin{lemm}\label{cont1}\cite{Chemin2011,Li2016nwpC}
		Let $y_0\in B^{\frac{1}{p}}_{p,1}$ with $1\leq p<\infty,$  and $f\in L^1([0,T];B^{\frac{1}{p}}_{p,1}).$ Define $\bar{\mathbb{N}}=\mathbb{N} \cup {\infty},$ for $n\in \bar{\mathbb{N}},$ denote by $y_n \in C([0,T];B^{\frac{1}{p}}_{p,1})$ the solution of
		\begin{equation}
			\left\{\begin{aligned}
				&\partial_ty_n+(A_n(u)+\Gamma)\partial_xy_n=f,\quad x\in \mathbb{R},\\
				&y_n(t,x)|_{t=0}=y_0(x), \\
			\end{aligned} \right. \label{e1}
		\end{equation}
	where $\Gamma$ is a real number.
		Assume for some $\alpha(t)\in L^1(0,T),\  \sup\limits_{n\in \bar{\mathbb{N}}} \|A_n(u)\|_{B^{1+\frac{1}{p}}_{p,1}}+\Gamma\leq \alpha (t).$ If $A_n(u)$ converges in $A_{\infty}(u)$ in $L^1([0,T];B^{\frac{1}{p}}_{p,1}),$ then the sequence $(y_n)_{n\in \mathbb{N}}$ converges in $C([0,T];B^{\frac{1}{p}}_{p,1}).$
	\end{lemm}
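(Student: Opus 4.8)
\section*{Proof proposal for Lemma \ref{cont1}}

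The plan is to reduce everything to the linear transport estimate of Lemma \ref{priori estimate} applied to suitable differences. First observe that, writing $v_n:=A_n(u)+\Gamma$ for the transport field of \eqref{e1}, the hypothesis $\sup_{n\in\bar{\mathbb N}}\|A_n(u)\|_{B^{1+\frac1p}_{p,1}}+\Gamma\le\alpha(t)$ together with the elementary bound $\|\partial_x g\|_{B^{\frac1p}_{p,1}}\lesssim\|g\|_{B^{1+\frac1p}_{p,1}}$ gives $\|\nabla v_n(t)\|_{B^{\frac1p}_{p,1}}\lesssim\alpha(t)$ \emph{uniformly in $n\in\bar{\mathbb N}$}; hence, in the notation of Lemma \ref{priori estimate} with $s=\frac1p$, $p<\infty$, $r=1$, one has $V_n(t)\le C\int_0^T\alpha(t')\,dt'=:K$ for all $n$. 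Applying Lemma \ref{priori estimate} already shows that $(y_n)_{n\in\bar{\mathbb N}}$ is bounded in $C([0,T];B^{\frac1p}_{p,1})$, and it will supply the uniform exponential factor $e^{K}$ in every estimate below.

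The obstruction to passing to the limit is that $y_\infty$ is only known to lie in $C_TB^{\frac1p}_{p,1}$, so $\partial_x y_\infty\in C_TB^{\frac1p-1}_{p,1}$ sits just below the algebra threshold of the critical space $B^{\frac1p}_{p,1}$ and cannot be multiplied against $A_\infty(u)-A_n(u)$. I would circumvent this by regularizing the data. For $N\in\mathbb N$ let $\bar y^{N}\in C([0,T];B^{\frac1p}_{p,1})$ solve the transport equation with field $A_\infty(u)+\Gamma$ but with smoothed data $S_Ny_0$ and smoothed forcing $S_Nf$; since $S_Ny_0\in B^{1+\frac1p}_{p,1}$, $S_Nf\in L^1_TB^{1+\frac1p}_{p,1}$ and $\partial_xA_\infty(u)\in L^1_TB^{\frac1p}_{p,1}$, Lemma \ref{existence} in the borderline case $s=1+\frac1p$, $r=1$ gives $\bar y^{N}\in C([0,T];B^{1+\frac1p}_{p,1})$, so that $\partial_x\bar y^{N}\in C_TB^{\frac1p}_{p,1}$ now lives in the Banach algebra $B^{\frac1p}_{p,1}$.

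Now split $y_n-y_\infty=(y_n-\bar y^{N})+(\bar y^{N}-y_\infty)$. The term $z^{N}:=\bar y^{N}-y_\infty$ solves the transport equation with field $A_\infty(u)+\Gamma$, data $(S_N-\mathrm{Id})y_0$ and forcing $(S_N-\mathrm{Id})f$, so Lemma \ref{priori estimate} and the uniform bound give
\[
\|z^{N}\|_{C_TB^{\frac1p}_{p,1}}\le e^{K}\Big(\|(\mathrm{Id}-S_N)y_0\|_{B^{\frac1p}_{p,1}}+\|(\mathrm{Id}-S_N)f\|_{L^1_TB^{\frac1p}_{p,1}}\Big),
\]
which tends to $0$ as $N\to\infty$ (the first term because $y_0\in B^{\frac1p}_{p,1}$ with third index $1$, the second by dominated convergence in $t$). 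The term $w_n^{N}:=y_n-\bar y^{N}$ solves the transport equation with field $A_n(u)+\Gamma$, data $(\mathrm{Id}-S_N)y_0$ and forcing $(\mathrm{Id}-S_N)f+\big(A_\infty(u)-A_n(u)\big)\partial_x\bar y^{N}$; using the algebra property $\|\big(A_\infty(u)-A_n(u)\big)\partial_x\bar y^{N}\|_{B^{\frac1p}_{p,1}}\lesssim\|A_\infty(u)-A_n(u)\|_{B^{\frac1p}_{p,1}}\,\|\bar y^{N}\|_{B^{1+\frac1p}_{p,1}}$ and integrating in time, Lemma \ref{priori estimate} yields
\[
\|w_n^{N}\|_{C_TB^{\frac1p}_{p,1}}\le e^{K}\Big(\|(\mathrm{Id}-S_N)y_0\|_{B^{\frac1p}_{p,1}}+\|(\mathrm{Id}-S_N)f\|_{L^1_TB^{\frac1p}_{p,1}}+C\|\bar y^{N}\|_{L^\infty_TB^{1+\frac1p}_{p,1}}\,\|A_\infty(u)-A_n(u)\|_{L^1_TB^{\frac1p}_{p,1}}\Big).
\]
The lemma then follows by the usual two-parameter argument: given $\eta>0$, first fix $N$ so large that $\|z^{N}\|_{C_TB^{\frac1p}_{p,1}}$ and the two cut-off terms above (all independent of $n$) are each $<\eta$; then, $N$ being frozen so that $\|\bar y^{N}\|_{L^\infty_TB^{1+\frac1p}_{p,1}}<\infty$, let $n\to\infty$ and use the hypothesis $\|A_\infty(u)-A_n(u)\|_{L^1_TB^{\frac1p}_{p,1}}\to0$ to make the remaining term $<\eta$. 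Hence $\limsup_{n\to\infty}\|y_n-y_\infty\|_{C_TB^{\frac1p}_{p,1}}\le 3\eta$ for every $\eta>0$, i.e. $y_n\to y_\infty$ in $C([0,T];B^{\frac1p}_{p,1})$.

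The main difficulty is precisely the one isolated in the second paragraph: the forcing $\big(A_\infty(u)-A_n(u)\big)\partial_x y_\infty$ of the naive difference equation is not controlled in $B^{\frac1p}_{p,1}$ because $\partial_x y_\infty$ falls one derivative short; the frequency-$N$ regularization of $(y_0,f)$, combined with the gain of one derivative in Lemma \ref{existence} and the algebra structure of $B^{\frac1p}_{p,1}$, is exactly what closes the scheme. The remaining points are routine: existence of the auxiliary $\bar y^{N}$ (Lemma \ref{existence}, noting $A_\infty(u)$ also satisfies the mild $L^\rho_TB^{-M}_{\infty,\infty}$ requirement since it is bounded in $B^{1+\frac1p}_{p,1}\hookrightarrow L^\infty$), and the uniform-in-$n$ boundedness of all exponential factors by $e^{K}$, both already recorded above.
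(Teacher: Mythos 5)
The paper does not prove this lemma at all: it is quoted with the citations \cite{Chemin2011,Li2016nwpC}, so there is no in-paper argument to compare against. Your proof is correct and is essentially the standard one behind those references: the uniform $e^{C\int_0^T\alpha}$ factor from Lemma \ref{priori estimate} at the level $s=\tfrac1p$, the correct identification of the obstruction (the forcing $(A_\infty(u)-A_n(u))\partial_x y_\infty$ loses a derivative in the critical space), and the cure by solving an auxiliary problem with mollified data $S_Ny_0$, $S_Nf$ so that $\partial_x\bar y^{N}$ lands in the algebra $B^{\frac1p}_{p,1}$, followed by the two-parameter limit ($N$ first, then $n$). The only points worth tightening are cosmetic: state explicitly that $\|S_Nf\|_{L^1_TB^{1+\frac1p}_{p,1}}\lesssim 2^N\|f\|_{L^1_TB^{\frac1p}_{p,1}}$ (so the $N$-dependent blow-up of $\|\bar y^{N}\|_{L^\infty_TB^{1+\frac1p}_{p,1}}$ is harmless precisely because $N$ is frozen before $n\to\infty$), and that propagation of the $B^{1+\frac1p}_{p,1}$ regularity of $\bar y^{N}$ uses the a priori estimate with $V'(t)=\|\partial_xA_\infty(u)\|_{B^{\frac1p}_{p,1}}\lesssim\alpha(t)$, which your hypotheses supply.
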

	\section{Local well-posedness}
	\par\
	
	In this section, we consider the local well-posedness of  \eqref{001} in Besov spaces, and   divided it into two cases
	: ${\rm (i)}~s>1+\frac 1 p,~p,r~\in [1,\infty];~{\rm (ii)}~s=1+\frac 1 p,~p\in [1,\infty),~r=1.$ Our main results can be stated as follows.
	\begin{theo}\cite{Chemin2011}\label{exis1}
		Let $u_0\in B^{s}_{p,r}$ with $1\leq p,r \leq \infty,~s>1+\frac 1 p.$ Then, there exists a time $T>0$ such that the equation \eqref{001} has a unique solution  in 
		\begin{equation*}
			E^s_{p,r}(T)\triangleq \left\{\begin{array}{ll}
				C([0,T];B^s_{p,r})\cap C^1([0,T];B^{s-1}_{p,r}), & \text{if}\ r<\infty,  \\
				C_w([0,T];B^s_{p,\infty})\cap C^{0,1}([0,T];B^{s-1}_{p,\infty}), & \text{if}\ r=\infty.
			\end{array}\right.
		\end{equation*}
	Moreover, the solution  depends continuously on the initial data.
	\end{theo}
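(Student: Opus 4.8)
The plan is to follow the classical Friedrichs regularization / compactness scheme for quasilinear transport-type equations, adapted to the Besov setting via the linear transport estimates (Lemmas \ref{R}--\ref{cont1}). Write \eqref{001} as $u_t+(u+\Gamma)u_x+\lambda u=Q(u)$ with $Q(u)=\Lambda^{-2}\partial_x\big(h(u)-u^2-\tfrac12 u_x^2\big)$. First I would construct approximate solutions: set $u^0=u_0$ (or the appropriate low-frequency truncation) and solve inductively the \emph{linear} transport problems
\begin{equation*}
\partial_t u^{n+1}+(u^n+\Gamma)\partial_x u^{n+1}=-\lambda u^{n+1}+Q(u^n),\qquad u^{n+1}(0)=S_{n+1}u_0,
\end{equation*}
which are solvable in $E^s_{p,r}(T)$ by Lemma \ref{existence} since $s>1+\frac1p\geq 1+\frac1{p_1}$ and $u^n$ has the required regularity. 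Then I would establish uniform bounds: using Lemma \ref{priori estimate} with $V'(t)\sim\|\partial_x u^n\|_{B^s_{p,r}}$ (legitimate since $s>\frac1p$), together with the algebra property of $B^s_{p,r}$ for $s>\frac1p$ to control $Q(u^n)$ by $C(1+\|u^n\|_{B^s_{p,r}})^{N}\|u^n\|_{B^s_{p,r}}$ (here the polynomial nonlinearity $h$ forces a higher-degree bound than in the pure CH case, but the estimate is still closed), one gets $\|u^{n+1}(t)\|_{B^s_{p,r}}\le C e^{C\int_0^t(\cdots)}(\cdots)$; choosing $T>0$ small depending only on $\|u_0\|_{B^s_{p,r}}$ makes $(u^n)$ bounded in $L^\infty([0,T];B^s_{p,r})$, and the equation then gives a bound in $C([0,T];B^{s-1}_{p,r})$ for $\partial_t u^n$.

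Next I would prove that $(u^n)$ is a Cauchy sequence in the \emph{lower-norm} space $C([0,T];B^{s-1}_{p,r})$: subtract consecutive equations, apply Lemma \ref{priori estimate} at regularity $s-1$ to $u^{n+1}-u^n$ (which solves a transport equation with speed $u^n+\Gamma$ and right-hand side involving $(u^n-u^{n-1})\partial_x u^n$, $-\lambda(u^{n+1}-u^n)$, and $Q(u^n)-Q(u^{n-1})$), use the product estimates in Besov spaces to bound everything by $C\,T\,\sup_m\|u^m-u^{m-1}\|_{B^{s-1}_{p,r}}$ times a constant depending on the uniform bound, and shrink $T$ again. This yields a limit $u\in C([0,T];B^{s-1}_{p,r})$; interpolating the strong $B^{s-1}$ convergence with the uniform $B^s$ bound gives convergence in every $B^{s'}_{p,r}$, $s'<s$, which suffices to pass to the limit in the equation. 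That $u$ actually lies in $E^s_{p,r}(T)$ — i.e. $u\in C([0,T];B^s_{p,r})$ when $r<\infty$ and in $C_w$ when $r=\infty$ — follows from Fatou-type lower semicontinuity for the uniform bound plus the transport continuity result in Lemma \ref{existence} applied to $u$ itself (viewing the full speed $u+\Gamma$ as given, now that $u$ is known to solve the equation), together with a standard argument upgrading weak to strong time-continuity; and $\partial_t u\in C([0,T];B^{s-1}_{p,r})$ then follows directly from the equation.

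Finally, uniqueness and continuous dependence. Uniqueness: given two solutions with the same data, the difference solves a transport equation and the same $B^{s-1}$-energy estimate plus Gr\"onwall forces it to vanish. Continuous dependence is the delicate point: the naive difference estimate only gives continuity of the data-to-solution map from $B^s_{p,r}$ into $C([0,T];B^{s-1}_{p,r})$, not into $C([0,T];B^s_{p,r})$. I would recover the top-norm continuity by the by-now-standard Bona--Smith argument: mollify the data, use the uniform bounds and the lower-order continuous dependence to compare the genuine solutions, and use the extra smoothness of the mollified solutions together with the uniform-in-$n$ transport estimate to control the high-frequency part — in the $r=\infty$ case one settles for continuity into $C([0,T];B^{s'}_{p,\infty})$ for $s'<s$ and weak continuity at level $s$. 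The main obstacle I anticipate is precisely this continuous-dependence step at the critical top regularity (and keeping the polynomial nonlinearity $h(u)$ from breaking the closed estimates), rather than existence or uniqueness, which are routine given the transport lemmas already quoted.
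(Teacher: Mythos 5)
Your proposal is essentially correct, but note first that the paper itself offers no proof of Theorem \ref{exis1}: the statement is quoted directly from \cite{Chemin2011} (the subcritical range $s>1+\frac1p$ is the classical Danchin-type result), and the only detailed well-posedness proof in the paper is for the critical case $s=1+\frac1p$, $r=1$ (Theorem \ref{local well-pose}). What you write is the standard scheme of the cited reference: linear transport iterates solved by Lemma \ref{existence}, uniform bounds from Lemma \ref{priori estimate} plus the algebra/product laws for $B^{s}_{p,r}$, $s>\frac1p$ (the quartic nonlinearity $h(u)$ only raises the degree of the polynomial bound, as you say), convergence of $(u^n)$ as a Cauchy sequence in $C([0,T];B^{s-1}_{p,r})$, Fatou plus interpolation to place the limit in $E^s_{p,r}(T)$, uniqueness from the $B^{s-1}$ estimate and Gr\"onwall, and continuous dependence by a Bona--Smith regularization (with only weak continuity at the top index when $r=\infty$). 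This is a legitimate and complete route for $s>1+\frac1p$. It differs from what the paper actually does at the critical index, where the $B^{s-1}$-level contraction degenerates: there the authors extract the limit by compactness (cut-offs $\phi^j$, Ascoli, Cantor diagonal) instead of a Cauchy-in-lower-norm argument, prove uniqueness by passing to Lagrangian coordinates and running a $W^{1,p}\cap W^{1,\infty}$ estimate along characteristics, and obtain continuous dependence by the splitting $u^n_x=z^n+w^n$ together with Lemma \ref{cont1}, rather than by Bona--Smith; none of these refinements is needed in your subcritical setting.

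One small correction: in the uniform-bound step the transport estimate must be applied with $V'(t)=\|\partial_x u^n(t)\|_{B^{s-1}_{p,r}}$ (this is the case $s>1+\frac1p$, or equivalently apply the lemma at level $s-1>\frac1p$ to $\partial_x u^{n+1}$), not $V'(t)\sim\|\partial_x u^n\|_{B^{s}_{p,r}}$ as written; the latter is of the order of $\|u^n\|_{B^{s+1}_{p,r}}$, which is not controlled by the iteration and would not close the estimate. Since $\|\partial_x u^n\|_{B^{s-1}_{p,r}}\leq \|u^n\|_{B^{s}_{p,r}}$, the intended bound and the rest of your argument go through unchanged.
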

	
	\begin{theo}\label{local well-pose}
		Let $u_0\in B^{1+\frac 1 p}_{p,1}$ with $p\in [1,\infty).$  Then, there exists a time $T>0$ suct that \eqref{001} has a unique solution $u\in E^p_T\triangleq  C\Big([0,T];B^{1+\frac 1 p}_{p,1}\Big)\cap C^1\Big([0,T];B^{\frac 1 p}_{p,1}\Big).$ Moreover, the solution  depends continuously on the initial data.
	\end{theo}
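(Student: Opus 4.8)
The plan is to prove Theorem~\ref{local well-pose} by the classical Friedrichs-regularization / iteration scheme, following the standard transport-equation approach (as in Danchin and Chemin-Lerner), but paying special attention to the endpoint regularity $s=1+\frac1p$ with $r=1$, where Lemma~\ref{existence} and Lemma~\ref{priori estimate} are still applicable precisely because $r=1$. Since the equation has the form \eqref{001} with $Q=\Lambda^{-2}\partial_x\big(h(u)-u^2-\frac12 u_x^2\big)$ and $h(u)=(\alpha+\Gamma)u+\frac\beta3 u^3+\frac\gamma4 u^4$, the whole right-hand side $-\lambda u+Q$ is, up to the transport part $(u+\Gamma)\partial_x u$, a smoothing nonlinear term of order zero in $B^{1+\frac1p}_{p,1}$: indeed $\Lambda^{-2}\partial_x$ gains one derivative, and the algebra property of $B^{1+\frac1p}_{p,1}$ (which is a Banach algebra since $1+\frac1p>\frac1p$) controls the polynomial nonlinearities and the quadratic term $u_x^2\in B^{\frac1p}_{p,1}$.

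Concretely, the steps are as follows. First, I would set up the approximate system: let $u^0:=0$ (or $S_0 u_0$) and define $u^{n+1}$ as the solution of the linear transport equation
\begin{equation*}
\partial_t u^{n+1}+(u^n+\Gamma)\partial_x u^{n+1}=-\lambda u^{n+1}+Q(u^n),\qquad u^{n+1}(0)=S_{n+1}u_0,
\end{equation*}
which is well-defined in $C([0,T];B^{1+\frac1p}_{p,1})$ by Lemma~\ref{existence} (the case $s=1+\frac dp$, $r=1$, with $p_1=p$, $d=1$). The $-\lambda u^{n+1}$ term can either be kept on the right-hand side or absorbed by the substitution $v^{n+1}=e^{\lambda t}u^{n+1}$; either way it contributes only a harmless factor $e^{C\lambda t}$ and does not affect the estimates. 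Second, using Lemma~\ref{priori estimate} with $V'(t)=C\|u^n\|_{B^{1+\frac1p}_{p,1}}$ (valid since $1+\frac1p>\frac1p$), together with the product/composition estimates in $B^{1+\frac1p}_{p,1}$ to bound $\|Q(u^n)\|_{B^{1+\frac1p}_{p,1}}\le C\big(\|u^n\|_{B^{1+\frac1p}_{p,1}}+\|u^n\|_{B^{1+\frac1p}_{p,1}}^4\big)$, I would prove by induction that there exist $T>0$ and a radius $M>0$ (depending only on $\|u_0\|_{B^{1+\frac1p}_{p,1}}$) such that $\|u^n\|_{L^\infty_T(B^{1+\frac1p}_{p,1})}\le M$ for all $n$; the usual choice is $T\sim (C(1+M^3))^{-1}$ with $M\sim 2\|u_0\|_{B^{1+\frac1p}_{p,1}}$, obtained from a Gronwall/ODE-comparison argument applied to $y_n(t):=\|u^n(t)\|_{B^{1+\frac1p}_{p,1}}$.

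Third, I would show $(u^n)$ is a Cauchy sequence in the lower-regularity space $C([0,T];B^{\frac1p}_{p,1})$. Writing the equation for $w^n:=u^{n+1}-u^n$, one finds $w^n$ solves a transport equation whose source terms are $-(w^{n-1})\partial_x u^n$, $-\lambda w^n$, $Q(u^n)-Q(u^{n-1})$, plus the difference of the initialized data $(S_{n+1}-S_n)u_0$; estimating these in $B^{\frac1p}_{p,1}$ via Lemma~\ref{priori estimate} (now with $V'(t)=C\|u^n\|_{B^{1+\frac1p}_{p,1}}$, which is the case $s=\frac1p$, $p<\infty$, $r=1$) and the uniform bound $M$ gives $\|w^n\|_{L^\infty_T(B^{\frac1p}_{p,1})}\le C T(\text{stuff})\,\|w^{n-1}\|_{L^\infty_T(B^{\frac1p}_{p,1})}+2^{-n}\|u_0\|_{B^{1+\frac1p}_{p,1}}$, so shrinking $T$ makes the map contractive and $u^n\to u$ in $C([0,T];B^{\frac1p}_{p,1})$. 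Fourth, interpolation between the uniform bound in $B^{1+\frac1p}_{p,1}$ and the convergence in $B^{\frac1p}_{p,1}$ gives convergence in every intermediate $B^{s'}_{p,1}$, $s'<1+\frac1p$; a Fatou-type argument recovers $u\in L^\infty_T(B^{1+\frac1p}_{p,1})$, and using the equation (the right side lies in $L^\infty_T(B^{\frac1p}_{p,1})$ hence in fact one checks $u_t\in C([0,T];B^{\frac1p}_{p,1})$) together with Lemma~\ref{existence} upgrades this to $u\in C([0,T];B^{1+\frac1p}_{p,1})\cap C^1([0,T];B^{\frac1p}_{p,1})=E^p_T$. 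Uniqueness follows from the same difference estimate applied to two solutions (in $B^{\frac1p}_{p,1}$, with Gronwall), and continuous dependence on the data follows from Lemma~\ref{cont1}, which is tailor-made for this: writing the solution map via the transport equation with $A_n(u)=u^n$, the continuity in $C([0,T];B^{\frac1p}_{p,1})$ is immediate, and a standard Bona-Smith-type argument (mollify the data, use uniform bounds plus interpolation) upgrades it to continuity in $C([0,T];B^{1+\frac1p}_{p,1})$.

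I expect the main obstacle to be the endpoint nature of the space: in $B^{1+\frac1p}_{p,1}$ one cannot afford any loss of derivative in the nonlinear estimates, so one must use the refined commutator estimate (Lemma~\ref{R}) and the fact that $B^{1+\frac1p}_{p,1}\hookrightarrow W^{1,\infty}$ (which fails at the Sobolev scale $B^{1+\frac1p}_{p,\infty}$) to close the a priori bound; this is exactly why the theorem requires $r=1$. A secondary technical point is handling the quadratic term $\frac12 u_x^2$: here $u_x\in B^{\frac1p}_{p,1}$, and since $\frac1p<\frac1p+1$ one needs $B^{\frac1p}_{p,1}$ to be an algebra — which it is, again because $r=1$ makes $B^{\frac1p}_{p,1}\hookrightarrow L^\infty$ — so that $u_x^2\in B^{\frac1p}_{p,1}$ and then $\Lambda^{-2}\partial_x(u_x^2)\in B^{1+\frac1p}_{p,1}$. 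With these two observations the rest is the routine iteration/contraction machinery, carried out exactly as in the proof of Theorem~\ref{exis1} but with the endpoint versions of the transport lemmas.
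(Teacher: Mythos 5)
Your existence step (the iteration scheme, the $B^{1+\frac1p}_{p,1}$ a priori bound via Lemmas \ref{existence}--\ref{priori estimate}, and the algebra property) matches the paper's First Step. But the core of your plan — proving $(u^n)$ is Cauchy in $C([0,T];B^{\frac1p}_{p,1})$, deducing uniqueness by the same difference estimate with Gronwall, and getting continuous dependence by Bona--Smith — breaks down exactly at this endpoint, and this is not a technicality you can route around with Lemma \ref{R}. The obstruction is the term $\Lambda^{-2}\partial_x\big(\tfrac12 u_x^2\big)$ in $Q$: when you estimate $Q(u^n)-Q(u^{n-1})$ (or $Q(u_1)-Q(u_2)$ for two solutions) with the difference $w$ measured only in $B^{\frac1p}_{p,1}$, you must bound $(u^n_x+u^{n-1}_x)\,\partial_x w$ in $B^{\frac1p-1}_{p,1}$, where $\partial_x w$ has regularity $\frac1p-1=-\frac1{p'}$. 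The product law $B^{\frac1p}_{p,1}\times B^{-\frac1{p'}}_{p,1}\to B^{-\frac1{p'}}_{p,1}$ fails at this borderline (the remainder term only lands in the third-index-$\infty$ space; e.g.\ for $p=2$ one gets $B^{-\frac12}_{2,\infty}$, not $B^{-\frac12}_{2,1}$), so the contraction inequality $\|w^n\|\le CT\|w^{n-1}\|+2^{-n}\|u_0\|$ cannot be closed, and neither can your Gronwall argument for uniqueness nor the stability estimate your Bona--Smith step would need. Your remark that $u_x^2\in B^{\frac1p}_{p,1}$ because $B^{\frac1p}_{p,1}$ is an algebra is correct for the a priori bound on a single solution, but it does not help for differences, which live one derivative lower. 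This loss is precisely why $s=1+\frac1p$, $r=1$ is called critical and why the scheme that works for $s>1+\frac1p$ (Theorem \ref{exis1}) does not extend verbatim.

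The paper circumvents this in two ways you would need to adopt (or replace by an argument of comparable strength). For existence it never shows the approximations are Cauchy: it uses the uniform bound plus an equicontinuity bound on $\partial_t u^n$, compactness of $u\mapsto \phi^j u$ from $B^{1+\frac1p}_{p,1}$ to $B^{\frac1p}_{p,1}$, Ascoli and a Cantor diagonal extraction, then Fatou and interpolation to pass to the limit in the equation. For uniqueness and stability it leaves the Besov framework entirely and reformulates the equation in Lagrangian coordinates $y_t=u(t,y)+\Gamma$, $U=u\circ y$, proving a Lipschitz estimate for $(U,y)$ in $W^{1,p}\cap W^{1,\infty}$ (this is where the smallness of $T$ and the bounds $\tfrac12\le y_\xi\le C_{u_0}$ enter), which yields $\|u_1-u_2\|_{B^0_{p,\infty}}\le C\|u_1(0)-u_2(0)\|_{B^{1+\frac1p}_{p,1}}$; continuous dependence then follows by interpolation together with the splitting $u^n_x=z^n+w^n$ and Lemma \ref{cont1}, not by a contraction-based Bona--Smith argument. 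As written, your proposal is missing the idea that replaces the failed lower-norm difference estimate, so the uniqueness and continuous-dependence parts have a genuine gap.
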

	\begin{proof}
		\textbf{First Step : Existence of a local solution.}
		\par
		We now construct approximate solutions $(u^n)_{n\in\mathbb{N}}$  which are smooth solutions of the following linear transport equation. Letting $u^0=0,$ we obtain
		\begin{equation}\label{14}
			\left\{\begin{aligned}
				&u^{n+1}_t+(u_n+\Gamma)u_x^{n+1}\triangleq f^n,\\
				&u^{n+1}(t,x)|_{t=0}=S^{n+1} u_0,
			\end{aligned}\right.
		\end{equation}
		with $f^n=\underbrace{\Lambda ^{-2}\partial_x\Big(h(u^n)-(u^n)^2-\frac{1}{2}(u^n_x)^2\Big)}_{Q^n}-\lambda u^n$ and $h(u^n)=(\alpha+\Gamma)u^n+\frac{\beta}{3}(u^n)^3+\frac{\gamma}{4}(u^n)^4.$ 
		Applying $\Delta_j$ to \eqref{14}, we have
		\begin{align}\label{s1}
			\partial_t u_j^{n+1}+u^n\cdot \partial_x u_{j}^{n+1}+\Gamma \partial_x u_{j}^{n+1}=F_j^n+ [u^n\cdot \nabla,\Delta_j]u^{n+1},
		\end{align}
	where $\Delta_j u^{n+1}=u_j^{n+1},~\Delta_j f^n=f_j.$
		Multiplying both sides of the equation \eqref{s1} by ${\rm sgn}( u_j^{n+1})|u_j^{n+1}|^{p-1}$  and integrating over $\mathbb{R}$. Using the fact that $\int_{\mathbb{R}}\partial_x u_{j}^{n+1}\cdot{\rm sgn}( u_j^{n+1})|u_j^{n+1}|^{p-1}dx=0,$ it follows that
		\begin{align*}
			\partial_t\| u_j^{n+1}\|^p_{L^p}-\int_{\mathbb{R}}{ div}u^n | u_j^{n+1}|^pdx&=p\int_{\mathbb{R}}f_j\cdot {\rm sgn}( u_j^{n+1})|u_j^{n+1}|^{p-1}dx\notag\\&~~+p\int_{\mathbb{R}}[u^n\cdot \nabla,\Delta_j]u^{n+1}\cdot{\rm sgn}( u_j^{n+1})|u_j^{n+1}|^{p-1}dx.
		\end{align*}
	 According to  Lemma \ref{priori estimate} , we infer that  
		\begin{align}\label{18}
			1+\|u^{n+1}\|_{B^{1+\frac{1}{p}}_{p,1}}
			&\leq C\exp\Big({C\int_0^t\|u^n(t')\|_{B^{1+\frac{1}{p}}_{p,1}}dt'}\Big)\Big[1+\|u_0\|_{B^{1+\frac{1}{p}}_{p,1}}\notag\\&~~+\int_0^t \exp\Big(-C\int_0^t\|u^n(t'')\|_{B^{1+\frac{1}{p}}_{p,1}}dt''\Big)(1+\|u^n\|_{B^{1+\frac{1}{p}}_{p,1}})^4dt'\Big].
		\end{align}Suppose that $u^n\in L^{\infty}([0,T]B^{1+\frac{1}{p}}_{p,1}).$  Then, we get
		\begin{align}\label{32}
			\|f^n\|_{B^{1+\frac{1}{p}}_{p,1}}\leq C(1+\|u^n\|_{B^{1+\frac{1}{p}}_{p,1}})^4,
		\end{align}
		which leads to  $f^n\in L^{\infty}([0,T];{B^{1+\frac{1}{p}}_{p,1}}).$ Using   Lemmas \ref{existence}-\ref{priori estimate} , we conclude that    there exists a global solution  $u^{n+1}\in C([0,T];{B^{1+\frac{1}{p}}_{p,1}})\cap
		C^1([0,T];{B^{\frac{1}{p}}_{p,1}})$ for all $T>0.$ 
		Fixed a  $T>0$ such that $\frac 2 3 CT(1+\|u_0\|_{B^{1+\frac{1}{p}}_{p,1}})^{\frac 1 2}<1$ and
		\begin{align}\label{19}
			1+\|u^n\|_{B^{1+\frac{1}{p}}_{p,1}}\leq \frac{(1+\|u_0\|_{B^{1+\frac{1}{p}}_{p,1}})^{\frac 1 2}}{1-\frac 2 3 Ct(1+\|u_0\|_{B^{1+\frac{1}{p}}_{p,1}})^{\frac 1 2}}.
		\end{align}
		Plugging \eqref{19} into \eqref{18}, we have
		\begin{align*}
			1+\|u^{n+1}\|_{B^s_{p,r}}\leq \frac{(1+\|u_0\|_{B^s_{p,r}})^{\frac 1 2}}{1-\frac 2 3 Ct(1+\|u_0\|_{B^s_{p,r}})^{\frac 1 2}},~~ \forall ~ t\in [0,T].
		\end{align*}
		Hence, by introduction, we see that $(u^n)_{n\in \mathbb{N}}$ is  bounded in $L^{\infty}([0,T];{B^{1+\frac{1}{p}}_{p,1}})$.
		
		It follows that the compactness method for the approximating sequence $(u^n)_{n\in\mathbb{N}}$ that we get a solution $u$ of \eqref{001}.  Owing to the uniformly boundedness of $u^n$ in $L^{\infty}([0,T];B^{1+\frac{1}{p}}_{p,1}),$ one can get  $\partial_tu^n$ is uniformly bounded in $L^{\infty}([0,T];B^{\frac{1}{p}}_{p,1}).$ Hence, we deduce that 
		$u^n$ is  uniformly bounded in  $C([0,T];B^{1+\frac{1}{p}}_{p,1})\cap C^{\frac 1 2}([0,T];B^{\frac{1}{p}}_{p,1}).$
		On the other hand,
		suppose that $(\phi^j)_{j\in\mathbb{N}}$ be a sequence of smooth functions with value in $[0,1]$ supported in $B(0,j+1)$ and value equal  to $1$ on $B(0,j).$ According to Theorem 2.94 in  \cite{Chemin2011}, we get the map $u^n\mapsto \phi^j u^n$ is compact from $B^{1+\frac{1}{p}}_{p,1}$ to $B^{\frac{1}{p}}_{p,1}$.  Ascoli's theorem entails that there has some function $u^j$ such that, up to extraction,  $(\phi^j u^n)_{j\in\mathbb{N}}$ converges to  $u^j$. And using the Cantor's diagonal process, we deduce that there exists a  subsequence  of $(u^j)_{j\in\mathbb{N}}$ ( still denoted by $(u^j)_{j\in\mathbb{N}}$ ) such that for all $j\in \mathbb{N},$ $\phi^j u^n$ concerges to $u^j$ in $C([0,T]; B^{\frac 1 p}_{p,1}).$  Thanks to $\phi^j\phi^{j+1}=\phi^j,$   we get $u^j=\phi^ju^{j+1}.$ Therefore,  there exists some function $u$  such that for all $\phi\in \mathcal{D}(\mathbb{R}),$ the sequence  $(\phi u^n)_{n\geq 1}$   tends to $                                                                                                                                                                                                                                                                                                                                                                                                                                                                                                                                                                                                                                                                                                                                                                                                                                                                                                                                                                                                                                                                                                                                                                                                                                                                                                                                                                                                                                                                                                                                                                                                                                                                                                                                                                                                                                                                                                                                                                                                                                                                                                                                                                                                                                                                                                                                                                                                                                                                                                                                                                                                                                                                                                                                                                                                                                                                                                                                                                                                                                                                                                                                                                                                                                                                                                                                                                                                                                                                                                                                                                                                                                                                                                                                                                                                                                                                                                                                                                                                                                                                                                                                                                                                                                                                                                                                                                                                                                                                                                                                                                                                                                                                                                                                                                                                                                                                                                                                                                                                                                                                                                                                                                                                                                                                                                                                                                                                                                                                                                                                                                                                                                                                                                                                                                                                                                                                                                                                                                                                                                                                                                                                                                                                                                                                                                                                                                                                                                                                                                                                                                                                                                                                                                                                                                                                                                                                                                                                                                                                                                                                                                                                                                                                                                                                                                                                                                                                                                                                                                                                                                                                                                                                                                                                                                                                                                                                                                                                                                                                                                                                                                                                                                                                                                                                                                                                                                                                                                                                                                                                                                                                                                                                                                                                                                                                                                                                                                                                                                                                                                                                                                                                                                                                                                                                                                                                                                                                                                                                                                                                                                                                                                                                                                                                                                                                                                                                                                                                                                                                                                                                                                                                                                                                                                                                                                                                                                                                                                                                                                                                                                                                                                                                                                                                                                                                                                                                                                                                                                                                                                                                                                                                                                                                                                                                                                                                                                                                                                                                                                                                                                                                                                                                                                                                                                                                                                                                                                                                                                                                                                                                                                                                                                                                                                                                                                                                                                                                                                                                                                                                                                                                                                                                                                                                                                                                                                                                                                                                                                                                                                                                                                                                                                                                                                                                                                                                                                                                                                                                                                                                                                                                                                                                                                                                                                                                                                                                                                                                                                                                                                                                                                                                                                                                                                                                                                                                                                                                                                                                                                                                                                                                                                                                                                                                                                                                                                                                                                                                                                                                                                                                                                                                                                                                                                                                                                                                                                                                                                                                                                                                                                                                                                                                                                                                                                                                                                                                                                                                                                                                                                                                                                                                                                                                                                                                                                                                                                                                                                                                                                                                                                                                                                                                                                                                                                                                                                                                                                                                                                                                                                                                                                                                                                                                                                                                                                                                                                                                                                                                                                                                                                                                                                                                                                                                                                                                                                                                                                                                                                                                                                                                                                                                                                                                                                                                                                                                                                                                                                                                                                                                                                                                                                                                                                                                                                                                                                                                                                                                                                                                                                                                                                                                                                                                                                                                                                                                                                                                                                                                                                                                                                                                                                                                                                                                                                                                                                                                                                                                                                                                                                                                                                                                                                                                                                                                                                                                                                                                                                                                                                                                                                                                                                                                                                                                                                                                                                                                                                                                                                                                                                                                                                                                                                                                                                                                                                                                                                                                                                                                                                                                                                                                                                                                                                                                                                                                                                                                                                                                                                                                                                                                                                                                                                                                                                                                                                                                                                                                                                                                                                                                                                                                                                                                                                                                                                                                                                                                                                                                                                                                                                                                                                                                                                                                                                                                                                                                                                                                                                                                                                                                                                                                                                                                                                                                                                                                                                                                                                                                                                                                                                                                                                                                                                                                                                                                                                                                                                                                \phi u$ in $C([0,T];B^{\frac{1}{p}}_{p,1}).$
		Since the uniform boundness of $u^n$ and Fatou property guarantee that $u\in L^{\infty}([0,T];B^{1+\frac{1}{p}}_{p,1}).$ Making use of interpolation, we get $\phi u^n$ tends to $\phi u$ in $C([0,T];B^{1+\frac{1}{p}-\varepsilon}_{p,1})$ for any $\varepsilon>0.$  
		
		We now check that $u$ is the solution of the equation \eqref{001}. For fixed $\psi \in B^{-\frac{1}{p}}_{p',\infty},$ we have
		\begin{align}\label{diff}
			\left\langle \partial_t(\phi u^n)-\partial_t(\phi u),\psi\right\rangle-	\left\langle (\phi u^n+\Gamma)\partial_x(\phi u^n)-((\phi u)+\Gamma)\partial_x(\phi u),\psi\right\rangle-\left\langle  F(\phi u^n)-F(\phi u),\psi\right\rangle\xrightarrow{n\rightarrow \infty} 0.
		\end{align}
	The main difficulty is to  prove that  $	\left\langle\Lambda^{-2}\partial_x\Big((\phi u^n)_x^2-(\phi u)_x^2\Big),\psi\right\rangle\xrightarrow{n\rightarrow \infty} 0$  in \eqref{diff}. For simplicity, we only handle the  $	\left\langle\Lambda^{-2}\partial_x\Big((\phi u^n)_x^2-(\phi u)_x^2\Big),\psi\right\rangle\stackrel{n\rightarrow \infty}{\longrightarrow} 0$.  The others are similar. Hence, 
		\begin{align}\label{oooo}
			|\left\langle\Lambda^{-2}\partial_x\Big((\phi u^n)_x^2-(\phi u)_x^2\Big),\psi\right\rangle&|\leq |\left\langle\Lambda^{-2}\partial_x\Big((\phi u^n)_x-(\phi u)_x\Big)\Big((\phi u^n)_x+(\phi u)_x\Big),\psi\right\rangle|\notag\\
			&\leq \|\Lambda^{-2}\partial_x\{\Big((\phi u^n)_x-(\phi u)_x\Big)\Big((\phi u^n)_x+(\phi u)_x\Big)\}\|_{B^{\frac 1 p}_{p,1}}\|\psi\|_{B^{-\frac 1 p}_{p',\infty}}\notag\\
			&\leq\|\Big((\phi u^n)_x-(\phi u)_x\Big)\Big((\phi u^n)_x+(\phi u)_x\Big)\|_{B^{\frac 1 p -1}_{p,1}}\|\psi\|_{B^{-\frac 1 p}_{p',\infty}}\notag\\
			&\leq\|\Big((\phi u^n)_x-(\phi u)_x\Big)\Big((\phi u^n)_x+(\phi u)_x\Big)\|_{B^{\frac 1 p -\epsilon}_{p,1}}\|\psi\|_{B^{-\frac 1 p}_{p',\infty}}\notag\\
			&
			\leq C\|(\phi u^n)_x+(\phi u)_x\|_{L^{\infty}}\|(\phi u^n)_x-(\phi u)_x\|_{B^{\frac{1}{p}-\epsilon}_{p,1}}\|\psi\|_{B^{{-\frac{1}{p}}}_{p',\infty}}\\ \notag&\leq C\|\phi u^n+\phi u\|_{B^{1+\frac{1}{p}-\epsilon}_{p,1}}\|\phi  u^n-\phi u\|_{B^{1+\frac{1}{p}-\epsilon}_{p,1}}\|\psi\|_{B^{{-\frac{1}{p}}}_{p',\infty}}.
		\end{align}
		By using the fact that  $\phi u^n\xrightarrow{n\rightarrow \infty} \phi u$ in $C([0,T];B^{1+\frac{1}{p}-\epsilon}_{p,1}),$  and that  $\phi u^n$ is bounded in $L^{\infty}([0,T];B^{1+\frac{1}{p}}_{p,1}),$   we get
		\eqref{oooo} tends to $0$ uniformly on $[0,T]$ as $n\rightarrow\infty$  and   $u\in E^p_T.$
		
		\textbf{Second Step: Uniqueness.}
		\par
		In order to prove the uniqueness of solution, we reformulate the equation using a transformation
		that corresponds to the transformation between Eulerian and Lagrangian coordinates. Let $u=u(t,x)$ denote the solution, and $t\mapsto y(t,\xi)$ be the characteristic as follows
		\begin{equation}
			\left\{\begin{aligned}
				&y_t(t,\xi)=u(t,y(t,\xi))+\Gamma,\quad x\in \mathbb{R},\\
				&y(t,\xi)|_{t=0}=\bar{y}(\xi). \\
			\end{aligned} \right. \label{02}
		\end{equation}
		Our  new variables  are $y(t,\xi)$, $U(t,\xi)=u(t,y(t,\xi)), $ then the equation \eqref{001} can be rewritten as
		\begin{align}\label{05}
			U_t(t,\xi)=u_t(t,y(t,\xi))+y_t(t,\xi)u_x(t,y(t,\xi))=\tilde{Q}(t,\xi)-\lambda U(t,\xi),
		\end{align}
		where
		\begin{align}\label{o1}
			\tilde{Q}(t,\xi)\triangleq P_x\circ y(t,\xi)=\frac 1 2 \int_{\mathbb{R}}{\rm sgn}(y(t,\xi)-x)e^{-|y(t,\xi)-x|}\Big(-h(u)+u^2+\frac{1}{2} u_x^2\Big)(t,x)dx.
		\end{align}
		After the change of variables $x=y(t,\eta),$ we have 
		\begin{align}\label{o2}
			\tilde{Q}(t,\xi)=\frac 1 2\int_{\mathbb{R}}{\rm sgn}(y(t,\xi)-y(t,\eta))e^{-|y(t,\xi)-y(t,\eta)|}\Big(-h(U)y_{\xi}+U^2y_{\xi}+\frac{U^2_{\xi}}{y_{\xi}}\Big)(t,\eta)d\eta.
		\end{align}
		For fixed $t, ~y(t,\cdot)$ is an increasing function, which implies that ${\rm sgn}(y(t,\xi)-y(t,\eta))={\rm sgn}(\xi-\eta).$ That is,
		\begin{align}\label{o7}
			\tilde{Q}(t,\xi)=&\frac 1 2\int_{\mathbb{R}}{\rm sgn}(\xi-\eta)e^{-|y(t,\xi)-y(t,\eta)|}\Big(-h(U)y_{\xi}+U^2y_{\xi}+\frac{U^2_{\xi}}{y_{\xi}}\Big)(t,\eta)d\eta\notag\\
			=&\frac 1 2\Big(\int_{-\infty}^{\xi}-\int_{\xi}^{+\infty}\Big)e^{-|y(t,\xi)-y(t,\eta)|}\Big(-h(U)y_{\xi}+U^2y_{\xi}+\frac{U^2_{\xi}}{y_{\xi}}\Big)(t,\eta)d\eta.
		\end{align}
		We now give another new variable $\zeta$  defined as $\zeta(t,\xi)=y(t,\xi)-\xi-\Gamma t,$ and $\zeta$ satisfies 
		\begin{align}
			\zeta_t(t,\xi)&=U(t,\xi). \label{o5}
		\end{align}
		Therefore,  the derivatives of $\tilde{Q}$ is given by
		\begin{align}\label{o3}
			\tilde{Q}_{\xi}=(-h(U)+U^2-P)(\zeta_{\xi}+1)+\frac{U^2_{\xi}}{2y_{\xi}},
		\end{align}
		with $$P(t,\xi)=-\frac 1 2\int_{\mathbb{R}}e^{-|(y(t,\xi)-y(t,\eta))|}\Big(-h(U)y_{\xi}+U^2y_{\xi}+\frac{U^2_{\xi}}{y_{\xi}}\Big)(t,\eta)d\eta.$$
		From the above equation, we can deduce that
		\begin{align}\label{o4}
			U_{t\xi}(t,\xi)=&\tilde{Q}_{\xi}(t,\xi)-\lambda U_{\xi}.
		\end{align}
	Combining  \eqref{02} and \eqref{b1}, we  infer that   $\zeta(t,\xi)$ satisfies the following integral form
		\begin{align}
			&\zeta(t,\xi)~=\int_0^tU(\tau,\xi)d\tau,\label{b0}\\
			&\zeta_{\xi}(t,\xi)=\int_0^tU_{\xi}(\tau,\xi)d\tau.\label{b1}
		\end{align}
		Note that $\zeta_{\xi}+1=y_{\xi}.$
		Using the uniform boundedness of   $u$   in $C([0,T]; B^{1+\frac{1}{p}}_{p,1})$ and the embedding   with $B^{1+\frac{1}{p}}_{p,1}\hookrightarrow W^{1,p}\cap W^{1,\infty},$ we get $u\in  C([0,T];W^{1,p}\cap W^{1,\infty} ).$ Thereby, we obtain that $\zeta_{\xi}$ is uniformly bounded in $L^{\infty}([0,T]; L^{\infty}),$    which implies   $y_{\xi}\in L^{\infty}([0,T];L^{\infty}).$ 
		More importantly, we have  $\frac{1}{2}\leq \zeta_{\xi}, y_{\xi}\leq C_{u_0}$  for $T>0$ small enough. Without loss of generality,  suppose that $t$ is sufficiently small, otherwise we use the continuity method. Now, we  prove that  $U(t,\xi)$ is bounded in $L^{\infty}([0,T]; W^{1,p})$ as follows.
		\begin{align*}
			&\|U\|^p_{L^p}=\int_{\mathbb{R}} |U(t,\xi)|^pd\xi=\int_{\mathbb{R}} |u(t,y(t,\xi))|^p\frac{1}{y_{\xi}}dy\leq \|u\|^p_{L^p}\|\frac{1}{y_{\xi}}\|_{L^{\infty}}\leq 2\|u\|^p_{L^p}\leq C;\\
			&\|U_{\xi}\|^p_{L^p}=\int_{\mathbb{R}} |U_{\xi}|^p|y_{\xi}|^{p-1}d\xi=\int_{\mathbb{R}} |u_x(t,y(t,\xi))|^p|y_{\xi}|^{p-1}dy\leq \|u_x\|^p_{L^p}\|{y_{\xi}}\|^{p-1}_{L^{\infty}}\leq C^{p-1}_{u_0}\|u_x\|^p_{L^p}\leq C.
		\end{align*}
		Applying the above inequalities, we get $U(t,\xi)\in L^{\infty}( [0,T];W^{1,p}\cap W^{1,\infty}),~\zeta \in L^{\infty}([0,T]; W^{1,p}\cap W^{1,\infty})$ and $\frac{1}{2}\leq \zeta_{\xi},~ y_{\xi}\leq C_{u_0}$ for any $t\in [0,T].$
		
		Let $u_1, u_2$ be two solutions of \eqref{001}, and $U_i(t,\xi)=u(t,y_i(t,\xi))\ (i=1,2)$ satisfy
		\begin{align}
			U_{it}(t,\xi)=u_{it}(t,y_i)+y_{it}(t,\xi)u_{ix}(t,y_i(t,\xi))=\tilde{Q}_i(t,\xi)-\lambda U_{i}(t,\xi).\label{21}
		\end{align}
		Likewise, we have $U_i(t,y_i(t,\xi))\in L^{\infty}([0,T]; W^{1,p}\cap W^{1,\infty}),~ \zeta_i(t,\xi)\in L^{\infty}([0,T]; W^{1,p}\cap W^{1,\infty})$ and $\frac{1}{2}\leq \zeta_{i\xi},~y_{i\xi}\leq C_{u_0}$ for sufficiently small $T>0.$
		
		We now demonstrate the following estimate.  $$\|\tilde{Q}_{1}(t,\xi)-\tilde{Q}_{2}(t,\xi)\|_{W^{1,p}\cap W^{1,\infty}} \leq C(\|U_1-U_2\|_{W^{1,p}\cap W^{1,\infty}}+\|y_1-y_2\|_{W^{1,p}\cap W^{1,\infty}}).$$
		According to \eqref{o7},  we can deduce that 
		\begin{align}\label{q1}
			~~~&\tilde{Q}_{1}(t,\xi)-\tilde{Q}_{2}(t,\xi)\notag\\
			&=
			\frac{1}{4}\Big(\int_{-\infty}^{\xi}-\int_{\xi}^{+\infty}\Big)e^{-|y_{2}(\xi)-y_{2}(\eta)|}\Big(\frac{U^2_{1\eta}}{y_{1\eta}}-\frac{U^2_{2\eta}}{y_{2\eta}}\Big)(\eta)d\eta\notag\\
			~~~&~~~+\frac{1}{4}\Big(\int_{-\infty}^{\xi}-\int_{\xi}^{+\infty}\Big)\Big(e^{-|y_{1}(\xi)-y_{1}(\eta)|}-e^{-|y_{2}(\xi)-y_{2}(\eta)|}\Big)\frac{U^2_{1\eta}}{y_{1\eta}} (\eta)d\eta\notag\\ 
			~~~&~~~+\frac{1}{2}\Big(\int_{-\infty}^{\xi}-\int_{\xi}^{+\infty}\Big)\Big(e^{-|y_{1}(\xi)-y_{1}(\eta)|}-e^{-|y_{2}(\xi)-y_{2}(\eta)|}\Big)\Big(-h(U_1)y_{1\eta}+U_1^2y_{1\eta}\Big) (\eta)d\eta\notag\\
			~~~&~~~+\frac{1}{2}\Big(\int_{-\infty}^{\xi}-\int_{\xi}^{+\infty}\Big)e^{-|y_{2}(\xi)-y_{2}(\eta)|}\Big(h(U_2)y_{2\eta}-h(U_1)y_{1\eta}+U_1^2y_{1\eta}-U_2^2y_{2\eta}\Big)(\eta)d\eta\notag\\
			&
			\triangleq \sum_{i=1}^4\mathcal{I}_i.
		\end{align}
		We first give an estimate of  $\mathcal{I}_1$  as follows.
		\begin{align}\label{q2}
			\mathcal{I}_1=&\frac{1}{4}\int_{-\infty}^{\xi}e^{-(y_{2}(\xi)-y_{2}(\eta))}\Big(\frac{U^2_{1\eta}}{y_{1\eta}}-\frac{U^2_{2\eta}}{y_{2\eta}}\Big) (\eta)d\eta-\frac{1}{4}\int_{\xi}^{+\infty}e^{y_{2}(\xi)-y_{2}(\eta)}\Big(\frac{U^2_{1\eta}}{y_{1\eta}}-\frac{U^2_{2\eta}}{y_{2\eta}}\Big) (\eta)d\eta
			\notag\\
			=&\frac{1}{4}\int_{-\infty}^{\xi}e^{-(\xi-\eta)}e^{-\int_0^t(U_2(\xi)-U_2(\eta))(\tau)d\tau}\Big(\frac{U^2_{1\eta}-U^2_{2\eta}}{y_{1\eta}}+\frac{U^2_{2\eta}(y_{2\eta}-y_{1\eta})}{y_{1\eta}y_{2\eta}}\Big) (\eta)d\eta\notag\\&-
			\frac{1}{4}\int_{\xi}^{+\infty}e^{\xi-\eta}e^{\int_0^t(U_2(\xi)-U_2(\eta))(\tau)d\tau}\Big(\frac{U^2_{1\eta}-U^2_{2\eta}}{y_{1\eta}}+\frac{U^2_{2\eta}(y_{2\eta}-y_{1\eta})}{y_{1\eta}y_{2\eta}}\Big) (\eta)d\eta.
		\end{align}
	Notice that $|e^{-|x|}-e^{-|y|}|\leq \max\{e^{-|x|},e^{-|y|}\}|x-y|,$ and $\frac{1}{2}\leq \zeta_{\xi}, ~y_{\xi}\leq C_{u_0}$ for $T>0$ small enough. According to $U_{i}(t,\xi)$ and $y_{i}(t,\xi)$ are bounded in $L^{\infty}_{T}( W^{1,\infty}),$ we get
		\begin{align*}
			\mathcal{I}_1 &\leq C\Big(\|\frac{U_{1\eta}+U_{2\eta}}{y_{1\eta}}\|_{L^{\infty}}+\|\frac{U^2_{2\eta}}{y_{1\eta}y_{2\eta}}\|_{L^{\infty}}\Big)\int_{\xi}^{+\infty}e^{\xi-\eta}\Big(|U_{1\eta}-U_{2\eta}|+|y_{1\eta}-y_{2\eta}|\Big)(\eta)d\eta\notag\\
			&~~+C\Big(\|\frac{U_{1\eta}+U_{2\eta}}{y_{1\eta}}\|_{L^{\infty}}+\|\frac{U^2_{2\eta}}{y_{1\eta}y_{2\eta}}\|_{L^{\infty}}\Big)\int_{-\infty}^{\xi}e^{-(\xi-\eta)}\Big(|U_{1\eta}-U_{2\eta}|+|y_{1\eta}-y_{2\eta}|\Big)(\eta)d\eta\notag\\
			&\leq C\Big[1_{\geq 0}(x)e^{-|x|}\ast \Big(|U_{1\eta}-U_{2\eta}|+|y_{1\eta}-y_{2\eta}|\Big)+1_{\leq 0}(x)e^{-|x|}\ast \Big(|U_{1\eta}-U_{2\eta}|+|y_{1\eta}-y_{2\eta}|\Big)\Big].
		\end{align*}
		Since the other terms  can be proved similarly, then we  conclude that
		\begin{align*}
			\tilde{Q}_{1}(t,\xi)-\tilde{Q}_{2}(t,\xi)& \leq C\Big[1_{\geq 0}(x)e^{-|x|}\ast \Big(|U_1-U_2|+|U_{1\eta}-U_{2\eta}|+|y_{1\eta}-y_{2\eta}|\Big)\Big]\notag\\
			&~~~+C\Big[1_{\leq 0}(x)e^{-|x|}\ast \Big(|U_1-U_2|+|U_{1\eta}-U_{2\eta}|+|y_{1\eta}-y_{2\eta}|\Big)\Big]\notag\\
			&~~~+C\|U_1-U_2\|_{L^{\infty}}\Big[1_{\geq 0}(x)e^{-|x|}\ast\Big(h(U_1)y_{1\eta}+\frac{U^2_{1\eta}}{y_{1\eta}}+U^2_{1\eta}y_{1\eta}\Big)\notag\\
			&~~~+1_{\leq 0}(x)e^{-|x|}\ast\Big(h(U_1)y_{1\eta}
			+\frac{U^2_{1\eta}}{y_{1\eta}}+U^2_{1\eta}y_{1\eta}\Big)\Big].
		\end{align*}
	That is
		\begin{align}\label{pp2}	
			\|\tilde{F}_{1}-\tilde{F}_{2}\|_{L^{\infty}\cap L^p}\leq C\Big(\|U_1-U_2\|_{L^{\infty}\cap L^p}+\|U_{1\eta}-U_{2\eta}\|_{L^{\infty}\cap L^p}+\|y_{1\eta}-y_{2\eta}\|_{L^{\infty}\cap L^p}\Big).
		\end{align}
		It is now clear that the same estimate holds for $\|\tilde{F}_{1\xi}-\tilde{F}_{2\xi}\|_{L^{\infty}\cap L^p}$ as follows
		\begin{align}\label{25}
			\|\tilde{F}_{1\xi}-\tilde{F}_{2\xi}\|_{L^{\infty}\cap L^p}\leq C\Big(\|U_1-U_2\|_{L^{\infty}\cap L^p}+\|U_{1\eta}-U_{2\eta}\|_{L^{\infty}\cap L^p}+\|y_{1\eta}-y_{2\eta}\|_{L^{\infty}\cap L^p}\Big).
		\end{align}
		Making use of \eqref{pp2} and \eqref{25} yields that 
		\begin{align}\label{26}
			\|\tilde{F}_{1}(t,\xi)-\tilde{F}_{2}(t,\xi)\|_{W^{1,p}\cap W^{1,\infty}}\leq C(\|U_1-U_2\|_{W^{1,p}\cap W^{1,\infty}}+\|y_1-y_2\|_{W^{1,p}\cap W^{1,\infty}}).
		\end{align}
		From the above analysis, we get
		\begin{align}\label{27}
			&\|U_1-U_2\|_{W^{1,p}\cap W^{1,\infty}}+\|y_1-y_2\|_{W^{1,p}\cap W^{1,\infty}}\notag\\ &\leq C \Big(\|U_1(0)-U_2(0)\|_{W^{1,p}\cap W^{1,\infty}}+\|y_1(0)-y_2(0)\|_{W^{1,p}\cap W^{1,\infty}}\Big)\notag\\
			&~~~+C\int_0^T\Big(\|U_1-U_2\|_{W^{1,p}\cap W^{1,\infty}}+\|y_1-y_2\|_{W^{1,p}\cap W^{1,\infty}}(t)\Big)dt.
		\end{align}
		Plugging \eqref{26} into \eqref{27} and using the fact that  $y_1(0)=y_2(0)=\xi,$   we infer that
		\begin{align*}
			\|U_1-U_2&\|_{W^{1,p}\cap W^{1,\infty}}+\|y_1-y_2\|_{W^{1,p}\cap W^{1,\infty}}\notag\\ &\leq C e^{CT}(\|U_1(0)-U_2(0)\|_{W^{1,p}\cap W^{1,\infty}}+\|y_1(0)-y_2(0)\|_{W^{1,p}\cap W^{1,\infty}})\notag\\ &\leq C e^{CT}(\|U_1(0)-U_2(0)\|_{W^{1,p}\cap W^{1,\infty}}+0)\notag\\ &\leq Ce^{CT} \|u_1(0)-u_2(0)\|_{B^{1+\frac{1}{p}}_{p,1}},
		\end{align*}
		from which it follows,
		\begin{align*}
			&\|u_1-u_2\|_{L^p}\leq C \|u_1 \circ y_1-	u_2 \circ y_2\|_{L^p}\notag\\  &~~~~~\leq C\|u_1 \circ y_1-u_2 \circ y_1+u_2 \circ y_1-u_2 \circ y_2\|_{L^p} \notag\\ &~~~~~\leq C\|U_1-U_2\|_{L^p}+C\|u_{2x}\|_{L^{\infty}}\|y_1-y_2\|_{L^p}\notag\\ &~~~~~\leq C \|u_1(0)-u_2(0)\|_{B^{1+\frac{1}{p}}_{p,1}}.
		\end{align*}
		The embedding $L^p\hookrightarrow B^{0}_{p,\infty}$ guarantees that
		\begin{align*}
			\|u_1-u_2\|_{B^{0}_{p,\infty}}\leq C\|u_1-u_2\|_{L^p}\leq C\|u_1(0)-u_2(0)\|_{B^{1+\frac{1}{p}}_{p,1}}.
		\end{align*}
		This completes the proof of the uniqueness of the solution of  \eqref{001} if $u_1(0)=u_2(0).$ \\
		
		\textbf{Third Step: The continuous dependence.}
		
		Assume that we are given $u^n$ and $u^{\infty}$, two  solutions of  \eqref{001} with the initial data $u^n_0$ and $u^{\infty}_0$ satisfying  $u^n_0$ tends to $u^{\infty}_0$ in $B^{1+\frac1 p }_{p,1}.$ The \textbf{First Step} and  \textbf{Second Step} guarantee that $u^n$ and  $u^{\infty}$ are uniformly bounded in $L^{\infty}([0,T]; B^{1+\frac1 p }_{p,1}),$ and
		\begin{align*}
			\|(u^n-u^{\infty})(t)\|_{B^{0 }_{p,\infty}}\leq C\|u^n_0-u^{\infty}_0\|_{B^{1+\frac1 p }_{p,1}}.
		\end{align*}
	This implies $u^n$ converges to $u^{\infty}$ in $C([0,T];B^{0 }_{p,\infty}).$ The interpolation ensures that $u^n\rightarrow u^{\infty}$ in $C([0,T];B^{1+\frac{1}{p}-\varepsilon}_{p,1})$ for any $\varepsilon >0$. If $\varepsilon =1$, we have
		\begin{align}\label{a0}
			u^n\rightarrow u^{\infty} \ in \ C([0,T];B^{\frac{1}{P}}_{p,1}).
		\end{align}
		In order to get $u^n$ tends to  $u^{\infty}$ in $C([0,T];{B^{1+\frac1 p }_{p,1}})$, we now prove that  $u_x^n$ tends to  $u_x^{\infty}$ in $C([0,T];{B^{\frac1 p }_{p,1}}).$ Letting $v^n=u_x^n,$  we split $v^n=z^n+w^n$  with $(z^n,w^n)$ satisfying
		\begin{equation*}
			\left\{\begin{array}{l}
				w^n_t+u^nw_x^n+\Gamma w_x^n=f^{\infty},\\ w^n(t,x)|_{t=0}=u^{\infty}_{0x},  \\
			\end{array}\right.
			\text{and} \quad
			\left\{\begin{array}{l}
				z^n_t+u^n z_x^n+\Gamma z_x^n=F^n-f^{\infty},\\
				z^n(t,x)|_{t=0}=u^n_{0x}-u^{\infty}_{0x},\\
			\end{array}\right.
		\end{equation*}
		with
		\begin{align*}
			f^n=-\lambda v^n-h(u^n)+(u^n)^2-\frac 1 2 (v^n)^2+\Lambda^{-2}\Big(h(u^n)-(u^n)^2-\frac 1 2 (v^n)^2\Big),~\forall~n\in\bar{\mathbb{N}}.
		\end{align*}
		Combining Lemma \ref{R} with the boundeness of $u^n$ and $u^{\infty}$ in $C([0,T];B^{1+\frac1 p}_{p,1}),$ we deduce that $\|u^n\|_{B^{1+\frac 1 p}_{p,1}}\leq C,$ and 
		\begin{align}\label{con0}
			\|f^n-f^{\infty}\|_{B^{\frac{1}{p}}_{p,1}}&\leq C\|u^n-u^{\infty}\|_{B^{1+\frac{1}{p}}_{p,1}}\Big(1+\|u^n\|_{B^{1+\frac{1}{p}}_{p,1}}+\|u^{\infty}\|_{B^{1+\frac{1}{p}}_{p,1}}\Big)^3 \notag\\
			&\leq C\Big(\|u^n-u^{\infty}\|_{B^{\frac{1}{p}}_{p,1}}+\|u_x^n-u_x^{\infty}\|_{B^{\frac{1}{p}}_{p,1}}\Big)\notag\\
			&\leq C\Big(\|u^n-u^{\infty}\|_{B^{\frac{1}{p}}_{p,1}}+\|z^n\|_{B^{\frac{1}{p}}_{p,1}}+\|w^n-w^{\infty}\|_{B^{\frac{1}{p}}_{p,1}}\Big).
		\end{align}
		Using Lemma \ref{priori estimate} and \eqref{con0}, for any $n\in\bar{\mathbb{N}}, t\in [0,T],$ we obtain
		\begin{align}\label{z1}
			\|z_n(t)\|_{B^{\frac{1}{p}}_{p,1}}&\leq C\Big(\|u^n_0-u^{\infty}_0\|_{B^{\frac{1}{p}}_{p,1}}+\int_0^t\|f^n-f^{\infty}\|_{B^{1+\frac{1}{p}}_{p,1}}dt'\Big)\\ \notag
			&\leq C\Big(\|u^n_0-u^{\infty}_0\|_{B^{\frac{1}{p}}_{p,1}}+\int_0^t\Big(\|u^n-u^{\infty}\|_{B^{\frac{1}{p}}_{p,1}}+\|z^n\|_{B^{\frac{1}{p}}_{p,1}}+\|w^n-w^{\infty}\|_{B^{\frac{1}{p}}_{p,1}}\Big)(t')dt',
		\end{align}
		where we used the fact  that $\int_{\mathbb{R}}\partial_x\Delta_jz\cdot{\rm sgn}({\Delta_jz})|\Delta_jz|^{p-1}dx=0$ when the linear term $\Gamma z_x$ is estimated in $B^{\frac 1 p}_{p,1}.$
		Hence, we have 
		\begin{align*}
			\|z^n\|_{B^{\frac{1}{p}}_{p,1}}\leq e^{Ct}\Big(\|v^n_0-v^{\infty}_0\|_{B^{\frac{1}{p}}_{p,1}}+\int_0^te^{-Ct'}(\|w^n-w^{\infty}\|_{B^{\frac{1}{p}}_{p,1}}+\|z^n\|{B^{\frac{1}{p}}_{p,1}})(t')dt'\Big).
		\end{align*}
		Notice  that
		\begin{align*}
			~& u^n\rightarrow u^{\infty}~~in ~C([0,T];{B^{\frac{1}{p}}_{p,1}});\notag\\
			~& v^n_0 \rightarrow v^{\infty}_0~~in ~C([0,T];{B^{\frac{1}{p}}_{p,1}});\notag\\
			~& w^n_0 \rightarrow w^{\infty}_0~in ~C([0,T];{B^{\frac{1}{p}}_{p,1}}).
		\end{align*}  
		Hence,  we have $z^n\rightarrow 0$ in $C([0,T];{B^{\frac{1}{p}}_{p,1}})$ as $n\rightarrow \infty.$  Lemmas \ref{existence}-\ref{priori estimate} and  $z^{\infty}=0$ entail that $z^n$ converges to  $z^{\infty}$ in  $C([0,T];{B^{\frac{1}{p}}_{p,1}}).$ Moreover, we thus conclude that $u^n\rightarrow u^{\infty}$ in $C([0,T];{B^{1+\frac{1}{p}}_{p,1}}).$   
		
		Thus, we complete the proof of Theorem \ref{local well-pose} .
	\end{proof}
	
	\section{Global existence and blow-up}
	\par
		$~~$In this section, we consider the global existence and blow-up of solutions for \eqref{001}.
	\subsection{Global existence for small data}
	\par
	We first prove the  global existence for small data.
	\begin{theo}\label{global}
		Let $u_0 \in B^{s}_{p,r}$ with  $(s,p,r)$ being as  in Theorems \ref{exis1}-\ref{local well-pose} . There is a  constant $\epsilon$ such that if
		\begin{align}\label{ss0}
			H_0	\triangleq|\alpha|+|\Gamma|+\|u_0\|_{B^s_{p,r}}+\frac{|\beta|}{3}\|u_0\|^2_{B^s_{p,r}}+\frac{|\gamma|}{4}\|u_0\|^3_{B^s_{p,r}}\leq \lambda \epsilon,
		\end{align}
		for all real numbers $\alpha, \beta, \gamma, \Gamma$ and $\lambda>0,$ then, there exists a unique global solution $u$ of the equation \eqref{001}, and for any $t\in [0,\infty)$ we have
		\begin{align}\label{small}
			H (t)\triangleq|\alpha|+|\Gamma|+\|u(t)\|_{B^s_{p,r}}+\frac{|\beta|}{3}\|u(t)\|^2_{B^s_{p,r}}+\frac{|\gamma|}{4}\|u(t)\|^3_{B^s_{p,r}}\leq H_0.
		\end{align}
	\end{theo}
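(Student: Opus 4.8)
The plan is to run a standard continuation (bootstrap) argument on the quantity $H(t)$, using the a priori estimate from Lemma \ref{priori estimate} for the transport equation \eqref{001} written as $u_t+(u+\Gamma)u_x=Q-\lambda u$. First I would recall that local well-posedness (Theorems \ref{exis1}--\ref{local well-pose}) already gives a unique solution $u\in E$ on a maximal time interval $[0,T^*)$; the goal is to show $T^*=\infty$ under the smallness hypothesis \eqref{ss0}, and that \eqref{small} persists. The key algebraic observation is that $H(t)$ is (up to constants) a polynomial of degree $3$ in $\|u(t)\|_{B^s_{p,r}}$, arranged precisely so that the nonlinear source $h(u)-u^2-\tfrac12 u_x^2$ appearing in $Q$ is controlled: from the product/composition estimates in Besov spaces one gets
\begin{align*}
\|Q(t)\|_{B^s_{p,r}}\leq C\big(|\alpha|+|\Gamma|+\|u\|_{B^s_{p,r}}+\tfrac{|\beta|}{3}\|u\|^2_{B^s_{p,r}}+\tfrac{|\gamma|}{4}\|u\|^3_{B^s_{p,r}}\big)\|u\|_{B^s_{p,r}}=C\,H(t)\,\|u(t)\|_{B^s_{p,r}},
\end{align*}
where we use $B^s_{p,r}\hookrightarrow L^\infty\cap W^{1,\infty}$ (valid since $s>1+\frac1p$ or $s=1+\frac1p,r=1$) to handle $u^2$, $u_x^2$ and the higher powers, together with $\Lambda^{-2}\partial_x$ being a smoothing operator of order $1$.

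Next I would differentiate: applying Lemma \ref{priori estimate} with $v=u+\Gamma$, $V'(t)=C\|u(t)\|_{B^s_{p,r}}$ (the transport/drift contribution, absorbed into the Gronwall factor), one obtains for a.e.\ $t$
\begin{align*}
\frac{d}{dt}\|u(t)\|_{B^s_{p,r}}\leq \|Q(t)-\lambda u(t)\|_{B^s_{p,r}}\leq C\,H(t)\,\|u(t)\|_{B^s_{p,r}}-\lambda\|u(t)\|_{B^s_{p,r}},
\end{align*}
after noting the drift term does not raise the norm (the $\Gamma\partial_x$ and $u\partial_x$ pieces cancel in the $L^p$-energy estimate, as in the proof of Theorem \ref{local well-pose}). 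Since $|\alpha|,|\Gamma|$ are constants, $\frac{d}{dt}H(t)$ equals a positive combination of $\frac{d}{dt}\|u\|_{B^s_{p,r}}$ with coefficients $1+\tfrac{2|\beta|}{3}\|u\|+\tfrac{3|\gamma|}{4}\|u\|^2$, which are bounded by $3$ times the coefficients making up $H$ itself; hence one derives a closed differential inequality of the shape $\frac{d}{dt}H(t)\leq (C H(t)-\lambda)\,\rho(t)$ for some nonnegative $\rho$, or more simply $\frac{d}{dt}H(t)\leq C' H(t)^2-\lambda H(t)$ after bounding $\|u\|_{B^s_{p,r}}\leq H(t)$ in the nonlinear term.

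From here the continuation is routine: choose $\epsilon$ so small that $C'\epsilon<1$ (equivalently $C'H_0\le C'\lambda\epsilon<\lambda$); then at $t=0$ we have $\frac{d}{dt}H\big|_{0}\le (C'H_0-\lambda)H_0<0$, and a standard bootstrap/maximality argument shows $H(t)\le H_0$ for all $t\in[0,T^*)$: if $H$ ever reached $H_0$ again from below, its derivative there would be $\le(C'H_0-\lambda)H_0<0$, a contradiction. Consequently $\|u(t)\|_{B^s_{p,r}}\le H_0$ stays bounded on $[0,T^*)$, so by the blow-up criteria for \eqref{001} (established in Section 4, or directly by the argument that a finite-time blow-up forces $\|u(t)\|_{B^s_{p,r}}\to\infty$) we must have $T^*=\infty$, and \eqref{small} holds globally. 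The main obstacle is the first step — obtaining the clean estimate $\|Q\|_{B^s_{p,r}}\le C H(t)\|u\|_{B^s_{p,r}}$ with the powers of $\|u\|_{B^s_{p,r}}$ appearing exactly as in the definition of $H$ — which requires carefully tracking constants through the Besov product and composition estimates for the quartic nonlinearity $h(u)$ and the quadratic terms $u^2,u_x^2$; once that structural inequality is in hand, the differential inequality and the smallness-continuation are standard.
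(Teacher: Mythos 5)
Your proposal follows essentially the same route as the paper: the structural estimate $\|Q\|_{B^s_{p,r}}\leq C\,H(t)\,\|u\|_{B^s_{p,r}}$, smallness of $H_0$ relative to $\lambda$ so the dissipation absorbs the nonlinearity, and a bootstrap giving $H(t)\leq H_0$ and hence global existence; the paper phrases this with the integral inequality (keeping $\lambda\|u\|_{L^1_t(B^s_{p,r})}$ on the left) and iterates the local theorem on $[T,2T],[2T,3T],\dots$, whereas you use a differential inequality plus the blow-up criterion, which is an equivalent continuation device. One step as written is not valid, though it is repairable inside your own scheme: the chain $\frac{d}{dt}\|u\|_{B^s_{p,r}}\leq\|Q-\lambda u\|_{B^s_{p,r}}\leq C H\|u\|_{B^s_{p,r}}-\lambda\|u\|_{B^s_{p,r}}$ cannot come from the triangle inequality (which gives a plus sign); the dissipative term must instead be extracted by keeping $-\lambda u$ in the transport/energy estimate itself — e.g.\ the $L^p$ estimate on each block $\Delta_j u$ produces the term $-\lambda\|\Delta_j u\|_{L^p}$, or equivalently one works with $e^{\lambda t}u$ — which yields $\frac{d}{dt}\|u\|_{B^s_{p,r}}+\lambda\|u\|_{B^s_{p,r}}\leq C H\|u\|_{B^s_{p,r}}$, exactly the form the paper records in its inequality \eqref{s2}; with that correction your bootstrap goes through as stated.
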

	\begin{proof}
		According to   \eqref{001} and the
		proofs of Theorems \ref{exis1}- \ref{local well-pose} , for any $t\in[0,T],$ we get
		\begin{align}\label{s2}
			&\|u(t)\|_{{B^s_{p,r}}}+\lambda \|u\|_{{L^1_t(B^s_{p,r})}}\notag\\
			&\leq \|u_0\|_{B^s_{p,r}}+C\int_0^t \|u(t')\|_{B^s_{p,r}}(|\alpha|+|\Gamma|+\|u(t')\|_{B^s_{p,r}}+\frac{|\beta|}{3}\|u(t')\|^2_{B^s_{p,r}}+\frac{|\gamma|}{4}\|u(t')\|^3_{B^s_{p,r}})dt',
		\end{align}
		where we used  Lemmas \ref{existence}-\ref{priori estimate} .  For any $\epsilon>0$ sufficiently small,  suppose that for all $t\in [0,T],$  we have
		\begin{align}\label{s4}
			|\alpha|+|\Gamma|+\|u(t)\|_{L^{\infty}_T(B^s_{p,r})}+\frac{|\beta|}{3}\|u(t)\|^2_{{L^{\infty}_T(B^s_{p,r})}}+\frac{|\gamma|}{4}\|u(t)\|^3_{{L^{\infty}_T(B^s_{p,r})}}\leq 2\lambda \epsilon.
		\end{align}
		This means
		\begin{align}\label{s5}
			\int_0^T \|u\|_{B^s_{p,r}}(	|\alpha|+|\Gamma|+\|u(t)\|_{B^s_{p,r}}+\frac{|\beta|}{3}\|u(t)\|^2_{B^s_{p,r}}+\frac{|\gamma|}{4}\|u(t)\|^3_{B^s_{p,r}})dt\leq 2\lambda \epsilon	\int_0^T \|u(t)\|_{B^s_{p,r}}dt,
		\end{align}
		for some sufficiently small $\epsilon.$
		
		Substituting \eqref{s5} into \eqref{s2} yields
		\begin{align}\label{s6}
			\|u(t)\|_{B^s_{p,r}}\leq \|u_0\|_{B^s_{p,r}},~\forall ~t\in [0,T].
		\end{align}
		Analogously, we infer that
		\begin{align}\label{s7}
			&|\alpha|+|\Gamma|+\|u(t)\|_{L^{\infty}_T(B^s_{p,r})}+\frac{|\beta|}{3}\|u(t)\|^2_{{B^s_{p,r}}}+\frac{|\gamma|}{4}\|u(t)\|^3_{{B^s_{p,r}}}
			\notag\\
			&~~~~~~~~+\lambda\int_0^t(\|u(t')\|_{B^s_{p,r}}+\frac{\beta}{3}\|u(t')\|^2_{B^s_{p,r}}+\frac{\gamma}{4}\|u(t')\|^4_{B^s_{p,r}})dt'\notag\\
			&\leq	H_0+C\int_0^t(|\alpha|+|\Gamma|+\|u(t')\|_{B^s_{p,r}}+\frac{|\beta|}{3}\|u(t')\|^2_{B^s_{p,r}}+\frac{\gamma}{4}\|u(t')\|^4_{B^s_{p,r}})\notag\\
			&~~~~~~~~~\cdot(|\alpha|+|\Gamma|+\|u(t')\|_{L^{\infty}_T(B^s_{p,r})}+\frac{|\beta|}{3}\|u(t')\|^2_{{L^{\infty}_T(B^s_{p,r})}}+\frac{|\gamma|}{4}\|u(t')\|^3_{{L^{\infty}_T(B^s_{p,r})}})dt'.
		\end{align}
		Combining Theorems \ref{exis1}-\ref{local well-pose} with \eqref{s4}-\eqref{s7}, we have the unique global solution on time initival $[0,T],$ and	$\sup_{t\in[0,T]}H(t)\leq H_0.$
Moreover, we have
	$$H(T)\leq H_0.$$
		Applying Theorems \ref{exis1}-\ref{local well-pose} again, we have the unique local solution  and   $ H(t)\leq \lambda \epsilon$  on $t\in [T, 2T].$ From \eqref{s4}-\eqref{s6}, we get
		$\sup_{t\in[T,2T]}H(t)\leq H_0.$ Hence, we deduce that
		$$\sup_{[0,2T]}H(t)\leq H_0.$$
		 
		 Repeating the bootstrap argument, we prove the global existence  of  \eqref{001}.
	\end{proof}
	\subsection{Blow-up  criteria}
	\par
	In this subsection, we present two blow-up criteria for the equation 
	\eqref{001}.
	\begin{lemm}\cite{Igor02decay}\label{conser}
		Let $u_0\in H^s$ with $s>\frac 3 2,$ and let $T^*$ be the maximal existence time of  the corresponding solution $u$ to \eqref{001}. For any $t\in[0,T^*),$ we have
		$$\|u(t) \|_{H^1}=e^{-\lambda t}\|u_0\|_{H^1}.$$
	\end{lemm}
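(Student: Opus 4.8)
The plan is to prove the identity by a direct energy estimate: differentiate $\|u(t)\|_{H^1}^2$ along the flow and exploit the special structure of \eqref{001}. Since $u_0\in H^s$ with $s>\frac32$, Theorem \ref{exis1} (with $p=r=2$) yields $u\in C([0,T^*);H^s)\cap C^1([0,T^*);H^{s-1})$, which is enough regularity to justify every integration by parts below (in particular $u,u_x\to 0$ at infinity and $u_x\in L^3$); alternatively, one mollifies the initial data, derives the identity for the resulting smooth solutions, and passes to the limit. Writing $\Lambda^2=1-\partial_{xx}$, I use $\|u\|_{H^1}^2=\langle u,\Lambda^2u\rangle=\int_{\mathbb{R}}(u^2+u_x^2)\,dx$, so that $\frac{d}{dt}\|u\|_{H^1}^2=2\langle\Lambda^2u,u_t\rangle$, and then substitute $u_t=-(u+\Gamma)u_x-\lambda u+Q$ from \eqref{001}.

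This splits $\frac{d}{dt}\|u\|_{H^1}^2$ into three contributions. The dissipative term gives $-2\lambda\langle\Lambda^2u,u\rangle=-2\lambda\|u\|_{H^1}^2$, which is the source of the exponential factor. For the term $Q$, since $\Lambda^2$ is self-adjoint and commutes with $\Lambda^{-2}$, one has $\langle\Lambda^2u,Q\rangle=\langle u,\partial_x(h(u)-u^2-\tfrac12u_x^2)\rangle=-\langle u_x,h(u)-u^2-\tfrac12u_x^2\rangle$; here $\int_{\mathbb{R}}u_x\,h(u)\,dx=0$ and $\int_{\mathbb{R}}u^2u_x\,dx=0$ because both integrands are exact $x$-derivatives of functions vanishing at infinity (recall $h(u)=(\alpha+\Gamma)u+\tfrac{\beta}{3}u^3+\tfrac{\gamma}{4}u^4$), so only $\tfrac12\int_{\mathbb{R}}u_x^3\,dx$ survives. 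For the transport term I expand $\Lambda^2u=u-u_{xx}$ and integrate by parts: $\int_{\mathbb{R}}u(u+\Gamma)u_x\,dx=0$ and $\Gamma\int_{\mathbb{R}}u_xu_{xx}\,dx=0$ as exact derivatives, while $2\int_{\mathbb{R}}uu_xu_{xx}\,dx=-\int_{\mathbb{R}}u_x^3\,dx$, so this contribution equals $-\int_{\mathbb{R}}u_x^3\,dx$.

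Adding the three contributions, the two cubic terms $\pm\int_{\mathbb{R}}u_x^3\,dx$ cancel exactly, leaving $\frac{d}{dt}\|u(t)\|_{H^1}^2=-2\lambda\|u(t)\|_{H^1}^2$ for $t\in[0,T^*)$. Integrating this linear ODE gives $\|u(t)\|_{H^1}^2=e^{-2\lambda t}\|u_0\|_{H^1}^2$, hence $\|u(t)\|_{H^1}=e^{-\lambda t}\|u_0\|_{H^1}$, as claimed. There is no serious obstacle; the only point requiring care is the bookkeeping of the nonlinear and nonlocal terms, and in particular verifying that the residual cubic term produced by the convolution kernel in $Q$ is exactly cancelled by the one produced by the transport nonlinearity — this cancellation is the heart of the computation and is precisely the mechanism that makes $\|u\|_{H^1}$ a conserved quantity in the undamped case $\lambda=0$.
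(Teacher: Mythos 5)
Your proof is correct: for solutions of \eqref{001} one indeed gets $\frac{d}{dt}\|u\|_{H^1}^2=2\langle\Lambda^2u,u_t\rangle=-2\lambda\|u\|_{H^1}^2$, because the cubic term $+\int_{\mathbb{R}}u_x^3\,dx$ coming from $Q$ cancels exactly the $-\int_{\mathbb{R}}u_x^3\,dx$ coming from $(u+\Gamma)u_x$, and the mollification/density remark covers the regularity range $\tfrac32<s<3$. The paper itself gives no proof (the lemma is quoted from the cited reference), and your direct $H^1$-energy computation is the standard argument behind it; the only blemish is a factor-of-two bookkeeping slip in the intermediate statements (you record the $Q$-contribution as $\tfrac12\int_{\mathbb{R}}u_x^3\,dx$ before the overall doubling but the transport contribution as $-\int_{\mathbb{R}}u_x^3\,dx$ after it), which does not affect the final cancellation or the conclusion $\|u(t)\|_{H^1}=e^{-\lambda t}\|u_0\|_{H^1}$.
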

	\begin{lemm}\label{blowup cri}
		Let $u\in B^s_{p,r}$ with $(s,p,r)$ being as in Theorems \ref{exis1}-\ref{local well-pose} , and let $T^*$ be the maximal existence time of the corresponding solution $u$  to \eqref{001}. Then the  solution  of  \eqref{001} blows up in finite time $T^*<\infty$ if and only if
		$$\int_0^{T^*}\|u_x(t')\|_{L^{\infty}}dt'=\infty.$$
	\end{lemm}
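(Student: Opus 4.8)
The plan is to prove the standard blow-up criterion for a transport-type equation: the only way a smooth solution can fail to persist is through the blow-up of $\|u_x\|_{L^\infty}$. The ``only if'' direction is immediate, since if $T^* < \infty$ and the solution extends past $T^*$ then by continuity $u \in C([0,T^*];B^s_{p,r})$, forcing $\int_0^{T^*}\|u_x\|_{L^\infty}dt' < \infty$ by the embedding $B^s_{p,r} \hookrightarrow W^{1,\infty}$. The substance is the ``if'' direction: assuming $\int_0^{T^*}\|u_x(t')\|_{L^\infty}dt' =: M < \infty$ with $T^* < \infty$, I would show $\sup_{t<T^*}\|u(t)\|_{B^s_{p,r}} < \infty$, and then the local well-posedness result (Theorem \ref{exis1} or \ref{local well-pose}) applied with data $u(T^*-\delta)$ for small $\delta$ yields a solution on a time interval of length independent of $\delta$, contradicting the maximality of $T^*$.

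The core estimate is a Gr\"onwall argument on $\|u(t)\|_{B^s_{p,r}}$. I would apply Lemma \ref{priori estimate} to the transport equation \eqref{001}, written as $u_t + (u+\Gamma)u_x = Q - \lambda u$ with $Q = \Lambda^{-2}\partial_x(h(u) - u^2 - \tfrac12 u_x^2)$, taking $v = u + \Gamma$ so that $V'(t) = \|\partial_x v\|_{L^\infty} = \|u_x\|_{L^\infty}$ (the constant $\Gamma$ drops out of the gradient). This gives
\begin{align*}
\|u(t)\|_{B^s_{p,r}} \leq e^{C\int_0^t \|u_x\|_{L^\infty}d\tau}\Big(\|u_0\|_{B^s_{p,r}} + \int_0^t e^{-C\int_0^{\tau}\|u_x\|_{L^\infty}}\big(\|Q\|_{B^s_{p,r}} + \lambda\|u\|_{B^s_{p,r}}\big)d\tau\Big).
\end{align*}
The point is then that $\|Q\|_{B^s_{p,r}}$ is controlled: since $\Lambda^{-2}\partial_x$ gains one derivative, $\|Q\|_{B^s_{p,r}} \lesssim \|h(u) - u^2 - \tfrac12 u_x^2\|_{B^{s-1}_{p,r}} \lesssim \|u_x\|_{L^\infty}\|u\|_{B^s_{p,r}} + (\text{lower-order polynomial terms in } \|u\|_{B^s_{p,r}})$, using the algebra/product estimates in $B^{s-1}_{p,r}$ with $s-1 > \tfrac1p$ and the fact that $h$ is a polynomial in $u$. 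Absorbing these, one obtains $\|u(t)\|_{B^s_{p,r}} \leq C(\|u_0\|_{B^s_{p,r}}, \lambda, T^*) + C\int_0^t (1 + \|u_x(\tau)\|_{L^\infty})\|u(\tau)\|_{B^s_{p,r}}d\tau$, and Gr\"onwall gives $\|u(t)\|_{B^s_{p,r}} \leq C\exp\big(C\int_0^t(1+\|u_x\|_{L^\infty})\big) \leq C\exp(C(T^* + M)) < \infty$ uniformly on $[0,T^*)$.

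For the critical case $s = 1 + \tfrac1p$, $r = 1$ the same scheme works but one must be slightly more careful: the product estimate $\|fg\|_{B^{s-1}_{p,1}} = \|fg\|_{B^{1/p}_{p,1}}$ for $B^{1/p}_{p,1}$ (which is not an algebra but is a multiplicative algebra under the Besov product/composition lemmas of \cite{Chemin2011}) still yields $\|u_x^2\|_{B^{1/p-1}_{p,1}} \lesssim \|u_x\|_{L^\infty}\|u_x\|_{B^{1/p-1}_{p,1}} + \dots$, and the composition estimate for the polynomial $h(u)$ requires $u \in B^{1/p}_{p,1} \cap L^\infty$, which holds. The version of Lemma \ref{priori estimate} with $V'(t) = \|\nabla v\|_{L^\infty}$ in the $f = v$ case is exactly what makes the argument close, so the $\|u_x\|_{L^\infty}$-norm is the right quantity.

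The main obstacle I anticipate is not conceptual but bookkeeping: making sure every nonlinear term in $h(u) - u^2 - \tfrac12 u_x^2$ is estimated in $B^{s-1}_{p,r}$ in a form where the \emph{highest}-order factor appears with a coefficient that is either $\|u_x\|_{L^\infty}$ (absorbable into the Gr\"onwall exponent) or a bounded power of $\|u\|_{L^\infty} \lesssim \|u\|_{B^s_{p,r}}$ raised to a power times a single copy of $\|u\|_{B^s_{p,r}}$ — one has to take care that no term produces $\|u\|_{B^s_{p,r}}^k$ with $k \geq 2$ that cannot be linearized, which is why the crude bound $\|u\|_{L^\infty} \leq \|u\|_{B^s_{p,r}}$ together with the \emph{a priori} finiteness one is trying to prove must be handled via a continuity/bootstrap argument on $[0,T^*)$ rather than a single application of Gr\"onwall. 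Once that is arranged, the contradiction with maximality via re-applying the local existence theorem is routine.
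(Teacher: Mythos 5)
Your proposal follows essentially the same route as the paper's proof: the trivial direction via the embedding $B^s_{p,r}\hookrightarrow W^{1,\infty}$, and the substantive direction via the transport estimates (Lemmas \ref{R} and \ref{priori estimate} with $V'(t)=\|u_x\|_{L^{\infty}}$), the bound $\|Q\|_{B^s_{p,r}}\lesssim \|u_x\|_{L^{\infty}}\|u\|_{B^s_{p,r}}+\text{polynomial terms}$, a Gr\"onwall argument, and a contradiction with the maximality of $T^*$ through the local well-posedness theorems. The bookkeeping obstacle you flag --- the cubic and quartic terms of $h(u)$ producing powers of $\|u\|_{L^{\infty}}$ that are not obviously absorbable into a linear Gr\"onwall coefficient --- is precisely the step the paper dispatches by simply asserting $G\leq C\|u\|_{B^s_{p,r}}\|u_x\|_{L^{\infty}}$ in \eqref{f1}, so your attempt reproduces the paper's argument, including its delicate point, which you at least acknowledge explicitly.
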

	\begin{proof}
		Combining  \eqref{001} with Lemma \ref{R} , it follows that
		\begin{align}\label{f0}
			&\|u(t)\|_{B^s_{p,r}}+\lambda 	\|u(t)\|_{L^1_t([0,T];B^s_{p,r})}\notag\\ &\leq \|u_0\|_{B^s_{p,r}}+C\int_{0}^t\Big(\|u_x\|_{L^{\infty}}\|u\|_{B^s_{p,r}}+\|u\|_{B^{s}_{p,r}}+\|u_x\|_{B^{s-1}_{p,r}}\|u_x\|_{B^{0}_{\infty,\infty}}+\|Q\|_{B^{s}_{p,r}}\Big)(t')dt'\notag\\& \leq \|u_0\|_{B^s_{p,r}}+C\int_{0}^t\Big(\underbrace{\|u_x\|_{L^{\infty}}\|u\|_{B^s_{p,r}}+\|Q\|_{B^{s}_{p,r}}}_{G}\Big)(t')dt'.
		\end{align}
	Moreover, we have
		\begin{align}\label{f1}
			G&\leq C\|u(t)\|_{B^s_{p,r}}\Big[\Big(1+\|u(t)\|_{L^{\infty}}\Big)^3+\|u_x(t)\|_{L^{\infty}}\Big]\notag\\
			&\leq C\|u(t)\|_{B^s_{p,r}}\|u_x(t)\|_{L^{\infty}}.
		\end{align}
		Plugging \eqref{f1} into \eqref{f0}, it follows that
		\begin{align}\label{f2}
			\|u(t)\|_{B^s_{p,r}}+\lambda 	\|u(t)\|_{L^1_t([0,T];B^s_{p,r})}\leq \|u_0\|_{B^s_{p,r}}+C\int_{0}^t\|u\|_{B^s_{p,r}}\Big[\Big(1+\|u\|_{L^{\infty}}\Big)^3+\|u_x\|_{L^{\infty}}\Big](t')dt',
		\end{align}
		which implies
		\begin{align}\label{1b}
			\|u(t)\|_{B^s_{p,r}}\leq \|u_0\|_{B^s_{p,r}}\exp{C\int_0^t\|u_x(t')\|_{L^{\infty}}dt'}.
		\end{align}
		If $T^*<\infty,$ and $\int_0^{T^*}\|u_x\|_{L^{\infty}}(t')dt'<\infty,$ we know that $u\in L^{\infty}([0,T^*);B^s_{p,r}).$ This contradicts the definition that $T^*.$
		
		According to Theorem \ref{local well-pose} and the fact that $B^s_{p,r}\hookrightarrow L^{\infty},$ if $\int_0^{T^*}\|u_x(t')\|_{L^{\infty}}dt'=\infty,$ then $u$ must blow up in finite time.
	\end{proof}
	\begin{lemm}\cite{Guozi2019ill}\label{log}
		Let $u\in H^2$ be a solution to \eqref{001}. Then, we have
		$$\|u_x\|_{L^{\infty}}\leq C\Big(\|u_x\|_{B^{0}_{{\infty},{\infty}}}\cdot \ln(2+\|u\|_{H^2})+1\Big).$$
	\end{lemm}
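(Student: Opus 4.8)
The plan is to run the standard Littlewood--Paley / Bernstein interpolation that underlies all Brezis--Gallouet--Wainger type inequalities, balancing the endpoint norm $\|u_x\|_{B^0_{\infty,\infty}}$ against the supercritical norm $\|u\|_{H^2}$ at a frequency cutoff chosen to depend logarithmically on $\|u\|_{H^2}$. Note this is a purely harmonic-analytic statement, independent of the equation \eqref{001} itself.

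First I would decompose $u_x=\sum_{j\geq-1}\Delta_j u_x$ and, for an integer $N\geq 0$ to be fixed later, split the sum into a low-frequency part $\sum_{-1\leq j\leq N}\Delta_j u_x$ and a high-frequency part $\sum_{j>N}\Delta_j u_x$. For the low-frequency part the triangle inequality and the definition of $B^0_{\infty,\infty}$ immediately give
\[
\Big\|\sum_{-1\leq j\leq N}\Delta_j u_x\Big\|_{L^\infty}\leq\sum_{-1\leq j\leq N}\|\Delta_j u_x\|_{L^\infty}\leq (N+2)\,\|u_x\|_{B^0_{\infty,\infty}}.
\]
For the high-frequency part I would use Bernstein's inequality in one space dimension, $\|\Delta_j g\|_{L^\infty}\leq C\,2^{j/2}\|\Delta_j g\|_{L^2}$, with $g=u_x$, together with $\|\Delta_j u_x\|_{L^2}\leq C\,2^{j}\|\Delta_j u\|_{L^2}$ and the elementary bound $2^{2j}\|\Delta_j u\|_{L^2}\leq C\|u\|_{H^2}$ (each Littlewood--Paley piece is dominated by the $\ell^2$ sum defining $H^2=B^2_{2,2}$). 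Chaining these, $\|\Delta_j u_x\|_{L^\infty}\leq C\,2^{3j/2}\|\Delta_j u\|_{L^2}\leq C\,2^{-j/2}\|u\|_{H^2}$, and summing the geometric series,
\[
\Big\|\sum_{j>N}\Delta_j u_x\Big\|_{L^\infty}\leq\sum_{j>N}\|\Delta_j u_x\|_{L^\infty}\leq C\,2^{-N/2}\,\|u\|_{H^2}.
\]

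Adding the two pieces gives $\|u_x\|_{L^\infty}\leq C\big((N+2)\|u_x\|_{B^0_{\infty,\infty}}+2^{-N/2}\|u\|_{H^2}\big)$ for every integer $N\geq 0$. The final step is to optimize: I would take $N$ to be the smallest nonnegative integer with $2^{N/2}\geq 2+\|u\|_{H^2}$, so that $2^{-N/2}\|u\|_{H^2}\leq 1$, while at the same time $N+2\leq C\big(1+\ln(2+\|u\|_{H^2})\big)\leq C\ln(2+\|u\|_{H^2})$ since $\ln(2+\|u\|_{H^2})\geq\ln 2>0$. Substituting this choice of $N$ into the previous display yields exactly
\[
\|u_x\|_{L^\infty}\leq C\big(\|u_x\|_{B^0_{\infty,\infty}}\,\ln(2+\|u\|_{H^2})+1\big).
\]

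I do not expect a genuine obstacle here; the only points that need some care are the exact exponents in the one-dimensional Bernstein inequality — so that the high-frequency tail is geometrically summable and controlled by $\|u\|_{H^2}$ rather than by a higher Sobolev norm — and the calibration of the cutoff $N$, which must grow like $\log_2\|u\|_{H^2}$ so that the two competing contributions balance and only the logarithm of $\|u\|_{H^2}$, rather than a power, survives on the right-hand side.
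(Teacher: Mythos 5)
Your argument is correct: the Littlewood--Paley splitting at a cutoff $N$, the one-dimensional Bernstein chain $\|\Delta_j u_x\|_{L^\infty}\leq C2^{3j/2}\|\Delta_j u\|_{L^2}\leq C2^{-j/2}\|u\|_{H^2}$, and the choice $2^{N/2}\sim 2+\|u\|_{H^2}$ together give exactly the stated logarithmic interpolation bound, and you are right that the inequality is purely harmonic-analytic and does not use equation \eqref{001}. The paper itself offers no proof of this lemma (it is quoted from the cited reference \cite{Guozi2019ill}), and your proof is the standard Brezis--Gallouet--Wainger type argument underlying that reference, so there is nothing to correct.
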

	Next we give another blow-up criterion for  \eqref{001} to claim norm inflation in critical Besov spaces.
	\begin{lemm}\label{blowup}
		Let $u_0\in H^2,$ and let $T^*$ be the maximal existence time of the corresponding  solution $u$ to \eqref{001}. Then $u$ blows up in finite time $T^*<\infty$ if and only if
		$$\int_0^{T^*}\|u_x(t')\|_{B^0_{{\infty},\infty}}dt'=\infty.$$
	\end{lemm}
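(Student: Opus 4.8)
\par The plan is to deduce this refined blow-up criterion from the basic one already established in Lemma \ref{blowup cri}, using the logarithmic interpolation inequality of Lemma \ref{log} together with the conservation law of Lemma \ref{conser}. Since $u_0\in H^2$ and $H^2=B^{2}_{2,2}$ embeds into $B^{1+\frac{1}{2}}_{2,1}$ (more precisely, the local well-posedness of Theorem \ref{exis1} applies with $s=2,p=r=2$), the solution $u$ lives in $C([0,T^*);H^2)$, so Lemma \ref{log} is available on $[0,T^*)$. The nontrivial direction is: if $\int_0^{T^*}\|u_x(t')\|_{B^0_{\infty,\infty}}\,dt'<\infty$ and $T^*<\infty$, then $T^*$ cannot be the maximal existence time. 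The converse direction is immediate from $B^0_{\infty,\infty}\hookleftarrow L^\infty$ (any bound on $\int\|u_x\|_{B^0_{\infty,\infty}}$ follows from a bound on $\int\|u_x\|_{L^\infty}$, so contrapositive of Lemma \ref{blowup cri} gives it; alternatively argue directly).

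\par For the main direction, set $M(t)\triangleq \|u(t)\|_{H^2}$ and $g(t)\triangleq\|u_x(t)\|_{B^0_{\infty,\infty}}$, and assume $\int_0^{T^*}g(t')\,dt'=:A<\infty$ with $T^*<\infty$. First I would run the $B^s_{p,r}$-energy estimate of \eqref{1b} with $s=2,p=r=2$, i.e.
\begin{align*}
	M(t)\leq M(0)\exp\Big(C\int_0^t\|u_x(t')\|_{L^\infty}\,dt'\Big).
\end{align*}
Next, insert Lemma \ref{log}: $\|u_x\|_{L^\infty}\leq C\big(g(t)\ln(2+M(t))+1\big)$. Writing $N(t)=\ln(2+M(t))$ and using the bound just displayed, one gets a Gronwall-type differential (or integral) inequality of the form
\begin{align*}
	N(t)\leq N(0)+C\int_0^t\big(1+g(t')\big)N(t')\,dt' + Ct,
\end{align*}
after absorbing constants and using $\ln(2+M(0)\exp(\cdots))\le \ln(2+M(0))+\cdots$. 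Since $\int_0^{T^*}(1+g(t'))\,dt'\leq T^*+A<\infty$, Gronwall's inequality yields $\sup_{[0,T^*)}N(t)<\infty$, hence $\sup_{[0,T^*)}M(t)=\sup_{[0,T^*)}\|u(t)\|_{H^2}<\infty$. Feeding this uniform $H^2$-bound back into $\|u_x\|_{L^\infty}\le C(g(t)\ln(2+M)+1)$ and integrating in time gives $\int_0^{T^*}\|u_x(t')\|_{L^\infty}\,dt'<\infty$, which by Lemma \ref{blowup cri} contradicts the maximality of $T^*$. This proves the claim.

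\par The step I expect to require the most care is controlling the nonlinear source term $Q$ inside the energy estimate at the $H^2$ level so that it is genuinely dominated by $\|u\|_{H^2}\|u_x\|_{L^\infty}$ (up to lower-order factors of $\|u\|_{L^\infty}$), exactly as in the passage from \eqref{f0} to \eqref{f2}; here the polynomial nonlinearities $h(u)$ contribute factors $(1+\|u\|_{L^\infty})^3$, which are harmless because $\|u\|_{L^\infty}\le C\|u\|_{H^1}=Ce^{-\lambda t}\|u_0\|_{H^1}$ stays bounded by Lemma \ref{conser}. Once that a priori control is in place, the logarithmic Gronwall argument is routine. A minor technical point is to make sure Lemma \ref{log} (stated for $u\in H^2$) is applied at each fixed time $t<T^*$, which is legitimate since $u(t)\in H^2$ throughout the lifespan.
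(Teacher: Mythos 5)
Your argument is correct and follows essentially the same route as the paper: an $H^2$ energy estimate (with the polynomial terms controlled via the $H^1$ decay of Lemma \ref{conser}), the logarithmic interpolation of Lemma \ref{log}, and a Gronwall argument on $\ln(e+\|u\|_{H^2})$ to get a uniform $H^2$ bound when $\int_0^{T^*}\|u_x\|_{B^0_{\infty,\infty}}dt'<\infty$, with the converse direction via $L^\infty\hookrightarrow B^0_{\infty,\infty}$ and Lemma \ref{blowup cri}. The only difference is organizational (you exponentiate \eqref{1b} first and then apply Gronwall to the logarithm, and you explicitly close the loop by re-deriving $\int_0^{T^*}\|u_x\|_{L^\infty}dt'<\infty$), which is equivalent to the paper's estimates \eqref{42}--\eqref{45}.
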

	\begin{proof}
		Combining Lemma \ref{R} with $\|u(t)\|_{H^1}\leq \|u_0\|_{H^1}$ for $t\in [0,T^*),$  we get
		\begin{align}\label{42}
			~~~&~\|u(t)\|_{H^2}+\lambda 	\|u(t)\|_{L^1_t(H^2)}\notag\\
			&\leq \|u_0\|_{H^2}+C\int_0^t\Big(\|u_x\|_{L^{\infty}}\|u\|_{H^1}+\|u_x\|_{H^1}\|u_x\|_{B^{0}_{\infty,\infty}}+ \|u\|_{H^2}+\|Q\|_{H^2}\Big)(t')dt'\notag\\
			&\leq \|u_0\|_{H^2}+C\int_0^t\|u\|_{H^2}\Big(1+\|u_x\|_{L^{\infty}}\Big)(t')dt'.
		\end{align}
		Using Lemma \ref{log} , we have
		\begin{align}\label{45}
			\|u\|_{H^2}\leq \|u_0\|_{H^2}+C\int_0^t\Big(1+\|u_x\|_{B^0_{\infty,\infty}}\ln(e+\|u\|_{H^2})\|u\|_{H^2}\Big)dt'.
		\end{align}
		The Gronwall inequality entails that
		\begin{align*}
			\|u\|_{H^2}\leq \|u_0\|_{H^2}e^{Ct+\int_0^t\|u_x\|_{B^0_{\infty,\infty}}\ln(e+\|u\|_{H^2})dt'}.
		\end{align*}
		Hence, it follows that
		$$\ln(e+\|u\|_{H^2})\leq \Big(\ln(e+\|u_0\|_{H^2})+Ct\Big)e^{C\int_0^t\|u_x\|_{B^{0}_{\infty,\infty}}dt'}.$$
		If $T^*<\infty,$ and $\int_0^{T^*}\|u_x(t')\|_{B^{0}_{\infty,\infty}}dt'<\infty,$ we deduce that $u\in L^{\infty}([0,T^*];H^2),$ which contradicts the assumption that  $T^*$ is the maximal existence time.
		On the other hand, since the fact that  $L^{\infty}\hookrightarrow B^0_{\infty,\infty},$   we get  $u$  must blow up in  finite time whenever $\int_0^{T^*}\|u_x(t')\|_{B^0_{\infty,\infty}}dt'=\infty.$
	\end{proof}
	\subsection{Blow-up}
	\par
	The following lemma shows that if the initial value satisfies certain conditions, the strong solution of  \eqref{001} will blow up in finite time.
	\begin{theo}\cite{Igor02decay}\label{blow up u}
		Given the initial datum $u_0\in H^3(\mathbb{R}),$ let $u=u(t,x)$ be the corresponding solution of \eqref{001},  $f(t)=\sup\limits_{x\in \mathbb{R}},$ and $\kappa: =max\{|\alpha|, \frac{|\beta|}{3},\frac{|\gamma|}{4},|\Gamma|\}.$ There exists a $\eta\in (0,\eta_0]$ and $x_0\in \mathbb{R}$ such that $\eta u_{x0}(x_0)<min\{-\|u_0\|^{\frac 1 2}_{H^1}, -\|u_0\|^{2}_{H^1}\}$ with $\eta_0=\sqrt{\frac{2}{1+12\kappa}}.$
		
		If
		\begin{align*}
			\lambda\in\Big(0,-\frac{f(0)} {4} \frac{\eta^2u^2_{x0}(x_0)-max\{-\|u_0\|_{H^1}, -\|u_0\|^{4}_{H^1}\}}{\eta^2u^2_{x0}(x_0)}\Big),
		\end{align*}
		then the solution of  \eqref{001} blows up in finite time.
	\end{theo}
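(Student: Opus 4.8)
The plan is to run the classical slope-along-characteristics argument for Camassa--Holm type equations. Since $u_{0}\in H^{3}(\mathbb{R})$, Theorem \ref{exis1} (applied with $s=3>\tfrac32=1+\tfrac1p$ and $p=r=2$) provides a solution $u\in C([0,T^{*});H^{3})\cap C^{1}([0,T^{*});H^{2})$, which is regular enough to differentiate \eqref{001} with respect to $x$ and to argue pointwise along particle trajectories. Let $\phi=\phi(t,x)$ solve $\phi_{t}=(u+\Gamma)(t,\phi)$, $\phi(0,x)=x$; for each $t<T^{*}$ this is an increasing $C^{1}$-diffeomorphism of $\mathbb{R}$. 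Fix the point $x_{0}$ and the number $\eta\in(0,\eta_{0}]$ furnished by the hypothesis, and set $m(t):=u_{x}(t,\phi(t,x_{0}))$, so that $m(0)=u_{0x}(x_{0})<0$.

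Differentiating \eqref{001} in $x$ yields $u_{tx}+(u+\Gamma)u_{xx}+u_{x}^{2}+\lambda u_{x}=Q_{x}$. Using the identity $\Lambda^{-2}\partial_{x}^{2}=\Lambda^{-2}-\mathrm{Id}$ one rewrites $Q_{x}=-h(u)+u^{2}+\tfrac12 u_{x}^{2}+P$, where $P:=\Lambda^{-2}\bigl(h(u)-u^{2}-\tfrac12 u_{x}^{2}\bigr)=\tfrac12\,e^{-|\cdot|}*\bigl(h(u)-u^{2}-\tfrac12 u_{x}^{2}\bigr)$ is nonlocal and smoothing. Evaluating along $\phi(t,x_{0})$ and cancelling the $u_{x}^{2}$ terms, $m$ obeys the Riccati equation
\[
\frac{dm}{dt}=-\tfrac12 m^{2}-\lambda m+\mathcal{R}(t),\qquad \mathcal{R}(t):=\bigl(u^{2}-h(u)+P\bigr)\bigl(t,\phi(t,x_{0})\bigr).
\]
The forcing $\mathcal{R}$ contains no derivative of $u$ in $L^{\infty}$: estimating the convolution by Young's and Cauchy--Schwarz inequalities, using $\|u\|_{L^{\infty}}^{2}\le\tfrac12\|u\|_{H^{1}}^{2}$, the bound $|h(u)|\le\kappa\bigl(2|u|+|u|^{3}+|u|^{4}\bigr)$, and --- crucially --- the identity $\|u(t)\|_{H^{1}}=e^{-\lambda t}\|u_{0}\|_{H^{1}}\le\|u_{0}\|_{H^{1}}$ of Lemma \ref{conser}, one obtains a uniform bound $|\mathcal{R}(t)|\le K$ with $K$ explicit in $\kappa$ and $\max\{\|u_{0}\|_{H^{1}},\|u_{0}\|_{H^{1}}^{4}\}$. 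Collecting all $\kappa$-contributions coming from $h(u)$ --- both directly and through $P$ --- one may take $K=\tfrac{1+12\kappa}{4}\max\{\|u_{0}\|_{H^{1}},\|u_{0}\|_{H^{1}}^{4}\}$, which is exactly how the threshold $\eta_{0}=\sqrt{2/(1+12\kappa)}$ enters.

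What remains is an ODE comparison. Writing $n:=m+\lambda$ the relation becomes $\dot n\le-\tfrac12 n^{2}+\tfrac12(\lambda^{2}+2K)$, a Riccati inequality whose smaller equilibrium is $-\sqrt{\lambda^{2}+2K}$. The slope hypothesis $\eta\,u_{0x}(x_{0})<\min\{-\|u_{0}\|_{H^{1}}^{1/2},-\|u_{0}\|_{H^{1}}^{2}\}$ forces $\eta^{2}m(0)^{2}>\max\{\|u_{0}\|_{H^{1}},\|u_{0}\|_{H^{1}}^{4}\}$, hence (using $\eta\le\eta_{0}$) $\tfrac12 m(0)^{2}>K$ with a definite gap; the upper bound imposed on $\lambda$ is precisely what turns this into $n(0)=m(0)+\lambda<-\sqrt{\lambda^{2}+2K}$. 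A standard monotonicity argument then gives that $n$ stays below $-\sqrt{\lambda^{2}+2K}$ thereafter and satisfies $\dot n\le-\delta n^{2}$ with a fixed $\delta>0$ for $t>0$, so $n(t)$, hence $m(t)$, tends to $-\infty$ at some finite time $T$. As $\|u_{x}(t)\|_{L^{\infty}}\ge-m(t)\to\infty$ when $t\uparrow T$, an $H^{3}$-solution cannot exist past $T$ (equivalently, the integral in Lemma \ref{blowup cri} must already diverge by $T$), so $T^{*}\le T<\infty$, as claimed.

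The main obstacle is the estimate of $\mathcal{R}$ in the second step: one must control the nonlocal term $P$ and the quartic nonlinearity $h(u)$ sharply enough, purely through $\|u_{0}\|_{H^{1}}$ and $\kappa$, to reproduce the exact constant in $\eta_{0}$ and the exact admissible window for $\lambda$. The dissipation has to be weighed on both sides: the factor $e^{-\lambda t}$ in Lemma \ref{conser} only shrinks $\mathcal{R}$ (which helps), while the term $-\lambda m$ in the Riccati equation carries the unfavourable sign for $m<0$ (which hurts), and it is the competition between the two that forces $\lambda$ into the bounded interval of the statement instead of merely into a neighbourhood of $0$.
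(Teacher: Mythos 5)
This theorem is not proved in the paper at all: it is imported from Freire \cite{Igor02decay} and used as a black box (e.g.\ in the proof of Theorem \ref{ill}), so there is no internal proof to compare against. Your scheme --- differentiate \eqref{001} in $x$, use $\Lambda^{-2}\partial_x^2=\Lambda^{-2}-\mathrm{Id}$, track $m(t)=u_x(t,\phi(t,x_0))$ along the flow of $u+\Gamma$, bound the nonlocal remainder via the $H^1$ decay of Lemma \ref{conser}, and close with a Riccati comparison after the shift $n=m+\lambda$ --- is indeed the standard route and, in outline, the argument of the cited source; the structural steps (the identity for $Q_x$, the equation $\dot m=-\tfrac12m^2-\lambda m+\mathcal{R}$, the algebra $\dot n\le-\tfrac12n^2+\tfrac12(\lambda^2+2K)$, and the finite-time divergence once $n(0)<-\sqrt{\lambda^2+2K}$) are all correct.

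The genuine gap is the quantitative core, which you assert rather than prove: the claim $|\mathcal{R}(t)|\le K$ with $K=\tfrac{1+12\kappa}{4}\max\{\|u_0\|_{H^1},\|u_0\|_{H^1}^4\}$ is exactly what produces $\eta_0=\sqrt{2/(1+12\kappa)}$ and the admissible window for $\lambda$, and it does not follow from the estimates you list as stated. A direct run of those bounds ($\|u\|_{L^\infty}^2\le\tfrac12\|u\|_{H^1}^2$, Cauchy--Schwarz against $\tfrac12e^{-|x|}$, $\|u(t)\|_{H^1}\le\|u_0\|_{H^1}$) gives a constant of the form $(c_0+c_1\kappa)\max\{\|u_0\|_{H^1},\|u_0\|_{H^1}^4\}$ with $c_0$ on the order of $1$ rather than $\tfrac14$, unless one exploits additional structure such as the pointwise inequality $\Lambda^{-2}\bigl(u^2+\tfrac12u_x^2\bigr)\ge\tfrac12u^2$; without pinning $K$ down, the hypothesis $\eta^2u_{0x}^2(x_0)>\max\{\|u_0\|_{H^1},\|u_0\|_{H^1}^4\}$ no longer yields $\tfrac12m(0)^2>K$. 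Likewise, your derived smallness condition $\lambda<\tfrac{m(0)^2-2K}{2|m(0)|}$ is not matched against the interval in the statement, which is expressed through the quantity $f(0)$ (a quantity your argument never uses; as printed, $f(t)=\sup_{x\in\mathbb{R}}$ is itself incomplete). So what you have is a correct proof of a qualitative version --- blow-up when $\lambda$ is small relative to the initial slope at $x_0$ --- while the precise thresholds that constitute the actual statement are not established; there is also the small omission that one must note $\lambda<|m(0)|$ before squaring $m(0)+\lambda<-\sqrt{\lambda^2+2K}$, though this does follow from your window.
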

	Now we give another blow-up result.
	\begin{theo}\label{blow-up n}
		Let $u_0\in H^s(\mathbb{R})$ with $s>\frac 3 2.$ Assume $u(t,x)$ is the corresponding solution of  \eqref{001} with the initial value $u_0.$ For $n\in \mathbb{N} / \{0\},$ if the slope of $u_0$ satisfies
		$$f(0)\triangleq \int_{\mathbb{R}} u^{2n+1}_{0x}dx<-\Big(\frac{8cK^2\|u_0\|^{\frac{2}{2n-1}}_{H^1}}{2n-1}\Big)^{\frac{2n-1}{2n}},$$
		with
		\begin{align*}
			K^2&=\frac{\lambda (n+1)\Big(4c\lambda (n+1)\|u_0\|^{\frac{2}{2n-1}}_{H^1}\Big)^{2n-1}}{2n^{2n}}+\frac{(2n+1)(C_1+C_3)}{n+1}\Big(\frac{8n(2n+1)C_2}{(2n-1)(n+1)}\Big)^n,\\
			C_1+C_3&=\frac{2(|\alpha|+|\Gamma|)\|u_0\|^{n+1}_{H^1}}{({\sqrt{2}})^{n-1}}+\frac{|\beta|(\|u_0\|^{3(n+1)}_{H^1}+\|u_0\|^{3n+5}_{H^1})}{3({\sqrt{2}})^{3n+1}}+\frac{|\gamma|\|u_0\|^{4n+4}_{H^1}}{2^{2(n+1)}}+\frac{\|u_0\|^{2(n+1)}_{H^1}}{2^n},\\
			C_2&=|\alpha|+|\Gamma|+\frac{|\beta|}{3}+\frac{|\gamma|}{4}+\frac 1 2 ,
		\end{align*}
	and $c$ is  the Gagliardo-Nirenberg constant, then there exists  a lifespan $T<\infty$ such that the solution $u$ blows up in finite time $T$. Moreover, the above bound of lifespan $T$ satisfies
		\begin{align}\label{n1}
			T\leq \int_{-f(0)}^{\infty}\frac{dy}{\frac{2n-1}{4c}\|u_0\|^{-\frac{2}{2n-1}}_{H^1}y^{\frac{2n}{2n-1}}-K^2},
		\end{align}
		where $n\in \mathbb{N} / \{0\},$  $c$ is a universal  Gagliardo-Nirenberg constant.
	\end{theo}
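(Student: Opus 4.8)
The plan is to track the evolution of $f(t) = \int_{\mathbb R} u^{2n+1}_x(t,x)\,dx$ along the flow and show it satisfies a Riccati-type differential inequality forcing $f$ to reach $-\infty$ in finite time. First I would differentiate \eqref{001} in $x$ to obtain the equation for $u_x$, namely $u_{tx} + (u+\Gamma)u_{xx} + u_x^2 + \lambda u_x = Q_x$, where $Q_x = \Lambda^{-2}\bigl(h(u) - u^2 - \tfrac12 u_x^2\bigr) - \bigl(h(u) - u^2 - \tfrac12 u_x^2\bigr)$ after using $\Lambda^{-2}\partial_x^2 = \Lambda^{-2} - \mathrm{Id}$. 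Multiplying by $(2n+1)u_x^{2n}$ and integrating, the transport term $\int (u+\Gamma)u_{xx}u_x^{2n}\,dx = -\tfrac{1}{2n+1}\int u_x\cdot u_x^{2n+1}\,dx$ integrates by parts to produce a term that partly cancels; carrying this out gives
\begin{align*}
f'(t) = -(2n+1)\int u_x^{2n+2}\,dx + 2n\!\int u_x^{2n+2}\,dx - (2n+1)\lambda f(t) + (2n+1)\!\int Q_x\, u_x^{2n}\,dx,
\end{align*}
so the leading nonlinear term is $-\int u_x^{2n+2}\,dx$, which is the crucial \emph{negative, coercive} contribution.

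Next I would bound $f(t)$ from above by $\|u_x(t)\|_{L^{2n+1}}^{2n+1}$ and use the Gagliardo–Nirenberg inequality together with the conservation law $\|u(t)\|_{H^1} = e^{-\lambda t}\|u_0\|_{H^1} \le \|u_0\|_{H^1}$ (Lemma \ref{conser}) to control the lower-order terms: $\int Q_x u_x^{2n}\,dx$ is estimated by putting $Q_x$ in $L^\infty$ (it is controlled by $\|u\|_{H^1}$-type quantities via the kernel $e^{-|x|}$, giving the constants $C_1, C_3$ through $h(u), u^2, u_x^2$), and the $\lambda f(t)$ term is handled by Young's inequality against $\|u_x\|_{L^{2n+2}}^{2n+2}$, producing the $K^2$ constant and the term with $C_2$. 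The coercive term $-\int u_x^{2n+2}\,dx$ dominates $|f(t)|^{\frac{2n+2}{2n+1}}$ from below via Hölder on the real line (comparing $L^{2n+2}$ and $L^{2n+1}$ norms with the $H^1$ bound absorbing the measure of the effective support), yielding
\begin{align*}
f'(t) \le -\,\frac{2n-1}{4c}\,\|u_0\|_H^{-\frac{2}{2n-1}}\,|f(t)|^{\frac{2n}{2n-1}} + K^2
\end{align*}
whenever $f(t) < 0$ and $|f(t)|$ is large. Under the hypothesis $f(0) < -\bigl(\tfrac{8cK^2}{2n-1}\|u_0\|_H^{\frac{2}{2n-1}}\bigr)^{\frac{2n-1}{2n}}$, the right-hand side is strictly negative at $t=0$ (the $|f|^{2n/(2n-1)}$ term beats $K^2$), so $f$ is decreasing, the inequality is self-improving, and $f(t) \to -\infty$ in time at most $\int_{-f(0)}^\infty \frac{dy}{\frac{2n-1}{4c}\|u_0\|_H^{-2/(2n-1)}y^{2n/(2n-1)} - K^2}$, which is finite since the integrand decays like $y^{-2n/(2n-1)}$ with exponent $> 1$. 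Blow-up of $f$ forces $\|u_x\|_{L^\infty} \to \infty$, hence by Lemma \ref{blowup cri} the solution blows up at some $T$ bounded by \eqref{n1}.

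\textbf{Main obstacle.} The delicate point is bookkeeping the constants so they match the stated $K^2$, $C_1+C_3$, $C_2$ exactly: one must carefully split $\int Q_x u_x^{2n}\,dx$ into the pieces coming from $\alpha u + \Gamma u$, $\tfrac{\beta}{3}u^3$, $\tfrac{\gamma}{4}u^4$, $u^2$, and $\tfrac12 u_x^2$, bound each by the appropriate power of $\|u_0\|_{H^1}$ using $\|u\|_{L^\infty}^2 \le \tfrac12\|u\|_{H^1}^2$ and $\|Q_x\|_{L^\infty} \lesssim \|h(u) - u^2 - \tfrac12 u_x^2\|_{L^1}$, and then apply Young's inequality $ab \le \epsilon a^{p} + C_\epsilon b^{p'}$ with the split exponents $\frac{n+1}{n}$ and $n+1$ chosen so the $\epsilon$-term is absorbed into $-\int u_x^{2n+2}$ with precisely the coefficient that leaves $\frac{2n-1}{4c}\|u_0\|_H^{-2/(2n-1)}$ in front of $|f|^{2n/(2n-1)}$. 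Everything else — the Riccati ODE comparison and the lifespan integral — is then routine.
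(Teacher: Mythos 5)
Your proposal follows essentially the same route as the paper: differentiate the equation in $x$, test against $(2n+1)u_x^{2n}$ to get an evolution inequality for $f(t)=\int_{\mathbb R}u_x^{2n+1}dx$ with a negative coercive term in $\int u_x^{2n+2}dx$, control the nonlocal and lower-order terms via the decaying $H^1$ norm (Lemma \ref{conser}), H\"older/Young and the interpolation inequality $\bigl(\int u_x^{2n+1}dx\bigr)^{\frac{2n}{2n-1}}\le c\|u_x\|_{L^2}^{\frac{2}{2n-1}}\int u_x^{2n+2}dx$, and close with a Riccati-type comparison giving the lifespan bound and the blow-up criterion. The only slip is in your intermediate bookkeeping: after accounting for the $\tfrac12 u_x^2$ piece hidden in $Q_x$, the net coercive coefficient is $-\tfrac{2n-1}{2}\int u_x^{2n+2}dx$ rather than $-\int u_x^{2n+2}dx$, exactly the constant-tracking you already flagged as the remaining work, and it does not affect the structure of the argument.
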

	\begin{proof}
		Letting the function $p=\frac{1}{2} e^{-|x|},$ then $p\ast f=\Lambda^{-2}f$ for any $f\in L^2(\mathbb{R}).$ Differentiating \eqref{001} with respect to  variable $x$,  we have \begin{align}\label{n2}
			u_{tx}+(u+\Gamma)u_{xx}+\lambda u_x=-h(u)+u^2-\frac 1 2 u_x^2+P.
		\end{align}
		In view of  \eqref{n2}, for any $n\in \mathbb{N} {/} \{0\},$ we infer that
		\begin{align}\label{n3}
			\frac{d}{dt}\int_{\mathbb{R}}u^{2n+1}_xdx+\lambda(2n+1) \int_{\mathbb{R}}u^{2n+1}_xdx &=-\frac{2n-1}{2}\int_{\mathbb{R}}u^{2n+2}_xdx\underbrace{-(2n+1)\int_{\mathbb{R}}h(u)u^{2n}_xdx}_{\mathcal{I}_1}\notag\\
			&~~~+\underbrace{(2n+1)\int_{\mathbb{R}}u^2u^{2n}_xdx}_{\mathcal{I}_2}\underbrace{-(2n+1)\int_{\mathbb{R}}P\cdot u^{2n}_xdx}_{\mathcal{I}_3},
		\end{align}
		with $P=\Lambda^{-2} (-h(u)+u^2+\frac 1 2u_x^2).$ \\
		Using Lemma \ref{conser} and H{\"o}lder's inequality yield that
		\begin{align}\label{n4}
			\int_{\mathbb{R}}\underbrace{(p\ast u)}_{P_1} u^{2n}_xdx&\leq \Big(\int_{\mathbb{R}}P_1^{n+1}dx\Big)^{\frac{1}{n+1}}\Big(\int_{\mathbb{R}}u^{2n+2}_xdx\Big)^{\frac{n}{n+1}}\\ \notag&\leq \frac{\|P_1\|^{n-1}_{L^{\infty}}\cdot\|P_1\|^2_{L^2}}{(n+1){\epsilon}^{n+1}}+\frac{n\epsilon^{\frac{n+1}{n}}}{n+1}\int_{\mathbb{R}}u^{2n+2}_xdx \notag\\
			&\leq \frac{\|u_0\|^{n+1}_{H^1}}{({\sqrt{2}})^{n-1}(n+1){\epsilon}^{n+1}}+\frac{n\epsilon^{\frac{n+1}{n}}}{n+1}\int_{\mathbb{R}}u^{2n+2}_xdx,
		\end{align}
		where we use the fact that $\|P_1\|_{L^{\infty}}\leq \|u\|_{L^{\infty}}\leq  \frac{\|u_0\|_{H^1}}{\sqrt{2}}$ and $\|P_1\|_{L^{2}}\leq  \|u\|_{L^2}\leq \|u_0\|_{H^1}.$ Note that $\int_0^t p(x)\ast (u^2+\frac 1 2 u_x^2)\cdot u_x^{2n}dx>0.$ Similar to the above inequality, we deduce that 
		\begin{align}\label{n99}
			\sum_{i=1}^{3}\mathcal{I}_i&\leq \frac{2n+1}{(n+1){\epsilon}^{n+1}}\Big[\underbrace{\frac{2(|\alpha|+|\Gamma|)\|u_0\|^{n+1}_{H^1}}{({\sqrt{2}})^{n-1}}+\frac{|\beta|(\|u_0\|^{3(n+1)}_{H^1}+\|u_0\|^{3n+5}_{H^1})}{3({\sqrt{2}})^{3n+1}}}_{C_1}\Big]\notag\\&~~+\frac{(4n^2+2n)\epsilon^{\frac{n+1}{n}}}{n+1}\underbrace{(|\alpha|+|\Gamma|+\frac{|\beta|}{3}+\frac{|\gamma|}{4}+\frac 1 2)}_{C_2}\int_{\mathbb{R}}u^{2n+2}_xdx
			\notag\\&~~+\frac{2n+1}{(n+1){\epsilon}^{n+1}}\Big[\underbrace{\frac{|\gamma|\|u_0\|^{4n+4}_{H^1}}{2^{2(n+1)}}+\frac{\|u_0\|^{2(n+1)}_{H^1}}{2^n}}_{C_3}\Big].
		\end{align}
		Choosing  $\epsilon=\Big(\frac{(2n-1)(n+1)}{8n(2n+1)C_2}\Big)^{\frac{n}{n+1}},$ and defining $f(t)=\int_{\mathbb{R}}u^{2n+1}_xdx,$ we obtain 
		\begin{align}\label{n6}
			\frac{d}{dt}f(t)\leq-\frac{2n-1}{4}\int_{\mathbb{R}}u^{2n+2}_xdx+\lambda(2n+1) |f(t)|+K_1^2,
		\end{align}
		with $K^2_1=\frac{(2n+1)(C_1+C_3)}{n+1}\Big(\frac{8n(2n+1)C_2}{(2n-1)(n+1)}\Big)^n.$

		Combining the Gagliardo-Nirenberg inequality with   the Young inequality, it follows that
		\begin{align}\label{n8}
			\Big(\int_{\mathbb{R}}u_x^{2n+1}dx\Big)^{\frac{2n}{2n-1}}&\leq
			c\Big(\int_{\mathbb{R}}u^2_xdx\Big)^{\frac{1}{2n-1}}\int_{\mathbb{R}}u_x^{2n+2}dx\leq c\|u_0\|^{\frac{2}{2n-1}}_{H^1}\int_{\mathbb{R}}u_x^{2n+2}dx,
		\end{align}
		and
		\begin{align}\label{n10}
			\lambda (2n+1)|f(t)|\leq \frac{1}{2n}\Big(\frac{\lambda (2n+1)}{\epsilon_0} \Big)^{2n}+\frac{2n-1}{2n}\Big({|f(t)|}{\epsilon_0}\Big)^{\frac{2n}{2n-1}}.
		\end{align}	
		Let $\epsilon_0 =\Big(\frac{n} {4c\|u_0\|^{\frac{2}{2n-1}}_{H^1}}\Big)^{\frac{2n-1}{2n}}.$ After a few calculations, we have
		\begin{align}\label{n12}
			\frac{d}{dt}f(t)\leq -\frac{2n-1}{8c\|u_0\|^{\frac{2}{2n-1}}_{H^1}}f^{\frac{2n}{2n-1}}(t)+K^2,
		\end{align}
		where $K^2=K^2_1+\frac{\lambda (n+1)\Big(4c\lambda (n+1)\|u_0\|^{\frac{2}{2n-1}}_{H^1}\Big)^{2n-1}}{2n^{2n}}.$
		Using the fact that
		$f(0)<-\Big(\frac{8cK^2\|u_0\|^{\frac{2}{2n-1}}_{H^1}}{2n-1}\Big)^{\frac{2n-1}{2n}},$
		we then get $f'(t)<0$ and $f(t)$ is a decreasing function. Suppose that $u$ is a global solution,  there exists a  time $t_1>0$ such that
		\begin{align}\label{n13}
			f'(t)\leq -\frac{2n-1}{16c\|u_0\|^{\frac{2}{2n-1}}_{H^1}}f^{\frac{2n}{2n-1}}(t),~~t\geq t_1,
		\end{align}
		which implies that
		\begin{align}\label{n14}
			\frac{1}{16c\|u_0\|^{\frac{2}{2n-1}}_{H^1}}(t-t_1)+f^{\frac{-1}{2n-1}}(t_1)\leq f^{\frac{-1}{2n-1}}(t)\leq 0, ~~t\geq t_1.
		\end{align}
		Thanks to $f(t_1)<0,$ if $t\geq t_1$ large enough, we deduce that the above inequality is not vaild. Therefore, there exists a lifespan $T<\infty$ such that $\lim_{t\rightarrow T} f(t)=-\infty.$  By solving \eqref{n12}, we end up with 
		\begin{align*}
			T\leq -\int_0^{T}\frac{f'(t)dt}{\frac{2n-1}{8c}\|u_0\|^{-\frac{2}{2n-1}}_{H^1}f^{\frac{2n}{2n-1}}(t)-K^2},
		\end{align*}
		which menas that
		\begin{align}\label{n15}
			T\leq\int_{f(0)}^{\infty}\frac{dy}{\frac{2n-1}{8c}\|u_0\|^{-\frac{2}{2n-1}}_{H^1}y^{\frac{2n}{2n-1}}-K^2}.
		\end{align}
		Then Lemma \ref{blow up u} guarantees that
		$$\lim\inf_{t\rightarrow T}\Big(\inf_{x\in \mathbb{R}} u_x(t,x)\Big)=-\infty.$$
	\end{proof}
	\subsection{Ill-posedness}
	\par
	\begin{theo}\label{ill}
		Let $1\leq p\leq \infty$ and $1< r\leq \infty.$ For any $\epsilon>0,$ there exists $u_0\in H^{\infty}$ such that the following holds\\
			$(1)$$~\|u_0\|_{B^{1+\frac 1 p}_{p,r}}\leq \epsilon;$\\
			$(2)~$There is a unique solution $u\in C([0,T];H^{\infty})$ to the equation \eqref{001} with a maximal lifespan
			$T<\epsilon;$\\
			$(3)~$ $\lim\sup\limits_{t\rightarrow T^-}\|u\|_{B^{1+\frac 1 p}_{p,r}}\geq \lim\limits_{t\rightarrow T^-}\|u\|_{B^1_{\infty,\infty}}=\infty.$	
	\end{theo}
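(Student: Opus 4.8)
The plan is to construct an explicit family of smooth initial data which is small in $B^{1+\frac1p}_{p,r}$ but whose solution develops an infinite $B^1_{\infty,\infty}$-norm in arbitrarily short time. First I would fix a large parameter $N\in\mathbb N$ and choose $u_0=u_{0,N}$ to be a sum of suitably rescaled, frequency-localized bumps of the form $u_{0,N}=\sum_{n} a_n \, 2^{-n}\,\phi(x)\cos(2^n x)$ (or a single high-frequency bump modulated so that $u_{0x}$ is very negative at one point), with the coefficients $a_n$ chosen so that $\|u_{0,N}\|_{B^{1+1/p}_{p,r}}\lesssim (\sum_n a_n^r)^{1/r}$ can be made $\le\epsilon$ by taking the $a_n$ to decay in $n$, while simultaneously $\inf_x u_{0x}$ (or the relevant quantity $f(0)=\int u_{0x}^{2n+1}$ with $2n+1$ replaced by the odd integer in Theorem \ref{blow-up n}) is forced to be hugely negative — this is possible because $B^1_{\infty,\infty}\subsetneq B^{1+1/p}_{p,r}$ when $r>1$: a lacunary series can have small $B^{1+1/p}_{p,r}$ norm yet arbitrarily large $B^1_{\infty,\infty}$ norm, and in particular arbitrarily large $\|u_{0x}\|_{L^\infty}$.

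Next I would verify part $(2)$. Since the chosen $u_0\in H^\infty\subset H^s$ for every $s$, Theorems \ref{exis1}--\ref{local well-pose} give a unique solution $u\in C([0,T];H^\infty)$ on its maximal interval $[0,T)$. To force $T<\epsilon$ I would invoke the quantitative blow-up criterion: either apply Theorem \ref{blow-up n} directly, checking that the hypothesis $f(0)<-\big(\tfrac{8cK^2\|u_0\|^{2/(2n-1)}_{H^1}}{2n-1}\big)^{(2n-1)/(2n)}$ holds (which, since $\|u_0\|_{H^1}=e^{-\lambda t}\|u_0\|_{H^1}$ decays and $\|u_0\|_{H^1}$ can be kept bounded while $f(0)\to-\infty$, is satisfied for $N$ large), and then the lifespan bound \eqref{n1} gives $T\le \int_{-f(0)}^\infty \frac{dy}{c' y^{2n/(2n-1)}-K^2}$, which tends to $0$ as $-f(0)\to\infty$; hence $T<\epsilon$ for $N$ large enough. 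Alternatively one uses the ODE $\frac{d}{dt}\inf_x u_x$ along characteristics (as in Theorem \ref{blow up u}) to get a Riccati-type inequality $m'(t)\le -\tfrac12 m(t)^2 + (\text{lower order})$ with $m(0)\ll 0$, giving finite-time blow-up at a time $\lesssim 1/|m(0)|<\epsilon$.

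Finally, part $(3)$. By the blow-up criterion Lemma \ref{blowup cri} (or Lemma \ref{blowup}), $T<\infty$ forces $\int_0^T\|u_x\|_{L^\infty}\,dt'=\infty$, and in the blow-up mechanism above one in fact has $\inf_x u_x(t,x)\to-\infty$ as $t\to T^-$, hence $\|u_x(t)\|_{L^\infty}\to\infty$; since $\|u(t)\|_{B^1_{\infty,\infty}}\gtrsim \|u_x(t)\|_{B^0_{\infty,\infty}}$ and one can upgrade (using Lemma \ref{log}: $\|u_x\|_{L^\infty}\le C(\|u_x\|_{B^0_{\infty,\infty}}\ln(2+\|u\|_{H^2})+1)$ together with the fact that $\|u\|_{H^2}$ blows up no faster than a double exponential in $\int\|u_x\|_{B^0_{\infty,\infty}}$, so $\|u_x\|_{B^0_{\infty,\infty}}$ must itself be unbounded on $[0,T)$ — indeed Lemma \ref{blowup} says precisely $\int_0^T\|u_x\|_{B^0_{\infty,\infty}}dt'=\infty$), we get $\lim_{t\to T^-}\|u(t)\|_{B^1_{\infty,\infty}}=\infty$, and a fortiori $\limsup_{t\to T^-}\|u(t)\|_{B^{1+1/p}_{p,r}}=\infty$ by the embedding $B^{1+1/p}_{p,r}\hookrightarrow B^1_{\infty,\infty}$.

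I expect the main obstacle to be the construction in the first step: one must design $u_0$ so that \emph{three} competing requirements hold simultaneously — $\|u_0\|_{B^{1+1/p}_{p,r}}\le\epsilon$, $\|u_0\|_{H^1}$ (equivalently $\|u_0\|_{L^2}$, $\|u_{0x}\|_{L^2}$) stays controlled so that the constant $K$ in Theorem \ref{blow-up n} does not also blow up, and $-f(0)$ is made as large as we please. The key point making this possible is the strict inclusion $B^{1+1/p}_{p,r}\subsetneq B^1_{\infty,\infty}$ for $r>1$ (failure of the endpoint embedding at $r=\infty$ on the Besov side, which is why we need $r>1$), realized concretely via a lacunary/Weierstrass-type sum whose $\ell^r$ coefficient norm is small but whose $\ell^\infty$ (pointwise) sum of derivatives is large; checking that this same data keeps $\|u_0\|_{H^1}$ bounded is a matter of choosing the bump amplitudes to decay, e.g. like $2^{-n}n^{-2/r}$, and is routine once the scaling is set up.
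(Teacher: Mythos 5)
Your overall strategy is the paper's: smooth lacunary initial data that is small in $B^{1+\frac1p}_{p,r}$ ($r>1$) but has an enormously negative derivative somewhere, a Riccati/slope-type blow-up result from Subsection 4.3 to force the maximal lifespan below $\epsilon$, and then Lemmas \ref{log}--\ref{blowup} together with the embedding $B^{1+\frac1p}_{p,r}\hookrightarrow B^1_{\infty,\infty}$ for part (3). The paper implements this with $g=\sum_{j\ge1}2^{-j}j^{-2/(1+r)}g_j$, $\hat g_j(\xi)=i2^{-j}\xi\tilde\chi(2^{-j}\xi)$, sets $u_{0,\epsilon}=\epsilon\|g\|^{-1}_{B^{1+1/p}_{p,r}}S_K(g)$, and applies Theorem \ref{blow up u}.

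However, the justification you give for the key construction is stated backwards and, as written, would fail: it is false that a function can have small $B^{1+\frac1p}_{p,r}$ norm and arbitrarily large $B^1_{\infty,\infty}$ norm, since $B^{1+\frac1p}_{p,r}\hookrightarrow B^1_{\infty,r}\hookrightarrow B^1_{\infty,\infty}$ --- indeed this embedding is exactly what you (and the theorem's part (3)) use to pass from $B^1_{\infty,\infty}$ inflation to $B^{1+\frac1p}_{p,r}$ inflation. What the construction actually exploits is that the critical norm does not control the Lipschitz seminorm: for $r>1$ one can have $\|(c_j)\|_{\ell^r}$ small while $\sum_j c_j=\infty$, so $u_{0x}$ can be made arbitrarily negative at one point; this is precisely what the paper's $g$ does ($g'(0)=-c\sum_j j^{-2/(1+r)}=-\infty$). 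Related details in your plan need the same correction: with $\cos(2^nx)$ the contributions to $u_{0x}$ at the chosen point essentially cancel (one needs a sine-type phase, as encoded in $\hat g_j=i2^{-j}\xi\tilde\chi(2^{-j}\xi)$, so they add with one sign); a single high-frequency bump cannot work, since for one dyadic block $\|u_{0x}\|_{L^\infty}\lesssim\|u_0\|_{B^{1+1/p}_{p,r}}$; and amplitudes $\sim n^{-2/r}$ fail to be $\ell^1$-divergent for $1<r<2$ (the paper's exponent $2/(1+r)$ works for all $r>1$). Finally, your primary route through Theorem \ref{blow-up n} is problematic for such oscillatory data, because $f(0)=\int_{\mathbb R}u_{0x}^{2n+1}dx$ involves sign cancellations and is not obviously made very negative while $\|u_0\|_{H^1}$ (hence $K$) stays controlled; the paper avoids this by invoking Theorem \ref{blow up u}, which only requires the slope to be very negative at a single point $x_0$ --- this is the ``alternative'' Riccati argument you mention, and it should be taken as the main route.
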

	\begin{proof}
		Fix $1\leq p\leq \infty$ and $1<r\leq \infty,$ and $\epsilon>0.$ Let
		\begin{align}\label{416}
			g(x)=\sum_{j\geq 1} \frac{1}{2^j j^{\frac{2}{1+r}}}g_j(x),
		\end{align}
		with  $\hat{g}_j(\xi)=i2^{-j}\xi \tilde{\chi}(2^{-j}\xi).$ Let $\chi$ be a non-negative, non-zero $C_0^{\infty}$ function satisfies $\tilde{\chi}\chi_0=\tilde{\chi}.$ Hence,  we obtain that $\Delta_j g(x)=\frac{1}{2^j j^{\frac{2}{1+r}}}g_j(x),$  $\|\Delta_j g(x)\|_{L^p}\sim  \frac{2^{\frac {j}{p'}}}{2^j j^{\frac{2}{1+r}}},$ and
		$$\|g\|_{B^{1+\frac 1 p}_{p,q}}\sim \|\frac{1}{j^{\frac{2}{1+r}}}\|_{l^q},$$
		from which it follows $g\in B^{1+\frac 1 p}_{p,r}\setminus B^{1+\frac 1 p}_{p,1},$ and
		$$g'(0)=\int \hat{g}'(\xi)d\xi=\int 2\pi i\xi\hat{g}(\xi)d\xi=-c\sum_{j\geq 1} \frac{1}{j^{\frac{2}{1+r}}}=-\infty.$$
		For any $\epsilon >0,$ let $u_{0,\epsilon}=\|g\|^{-1}_{B^{1+\frac 1 p}_{p,r}}\cdot \epsilon S_{K}(g)$ where $K$ is a large enough such that $\eta u'_{0,\epsilon}(0)<\min\{-\|u_0\|^{\frac 1 2}_{H^1},$\\ $\|u_0\|^{2}_{H^1}\}$. Therefore, $u_{0,\epsilon}\in H^{\infty}, \|u_{0,\epsilon}\|_{B^{1+\frac 1 p}_{p,r}}\leq \epsilon.$ Taking advantage of  Theorem \ref{blow up u} , we get there exists an unique associated solution $u\in C([0,T];H^{\infty})$ with maximal  lifespan $T<\epsilon.$ By Lemmas \ref{log}-\ref{blowup} , we can prove that $\lim\sup_{t\rightarrow T^-}\|u\|_{B^{1}_{\infty,\infty}}=\infty.$
	\end{proof}
	\smallskip
	\noindent\textbf{Acknowledgments}
	This work was partially supported by NNSFC (Grant No. 12171493 ),  FDCT (Grant No. 0091/2018/A3), the Guangdong Special Support Program (Grant No.8-2015).
	
	\addcontentsline{toc}{section}{\refname}

\end{document}